
\documentclass[12pt]{amsart}
\usepackage{amssymb}

\usepackage{amsmath}
\usepackage{amscd}
\usepackage{amsthm}
\usepackage{graphicx}
\usepackage[all]{xy}

\setlength{\topmargin}{0pt}
\setlength{\oddsidemargin}{0cm}
\setlength{\evensidemargin}{0cm}
\setlength{\textheight}{23.5cm}
\setlength{\textwidth}{15cm}

\numberwithin{equation}{section}

\theoremstyle{plain}
 \newtheorem{thm}{Theorem}[section]
 \newtheorem{lem}[thm]{Lemma}
 \newtheorem{pro}[thm]{Proposition}
 \newtheorem{cor}[thm]{Corollary}
 \newtheorem{conj}[thm]{Conjecture}
 
 \newtheorem{exa}[thm]{Example}
\theoremstyle{definition}

 \newtheorem{defn}[thm]{Definition}

\begin{document}

\title{On a Lomonaco-Kauffman conjecture}

\author{Takahito Kuriya}
\date{}
\address{Kyushu University, Faculty of Mathematics, 6-10-1 Hakozaki, Higasiku, Fukuoka 812-8581, Japan}
\email{marron@math.kyushu-u.ac.jp}
\maketitle
\begin{abstract} 
Samuel J. Lomonaco Jr and Louis H. Kauffman conjectured that tame knot theory and knot mosaic theory are equivalent. 
We give a proof of the Lomonaco-Kauffman conjecture.
\end{abstract}

\section{Introduction}{\label {intro}}
In {\cite{LK}}, Samuel J. Lomonaco Jr and Louis H. Kauffman created a formal system $(\mathbb{K},\mathbb{A})$ consisting of a graded set $K$ of symbol strings, called knot mosaics, and a graded subgroup $A$, called the knot mosaic ambient group, of the
group of all permutations of the set of knot mosaics $K$ , and they conjectured that the formal system $(\mathbb{K},\mathbb{A})$ fully captures the entire structure
of tame knot theory. 
\begin{conj}
Let $k_{1}$ and $k_{2}$ be two tame knots (or links), and let $K_{1}$ and
$K_{2}$ be two arbitrary chosen mosaic representatives of $k_{1}$ and $k_{2}$,
respectively. \ Then $k_{1}$ and $k_{2}$ are of the same knot type if and only
if the representative mosaics $K_{1}$ and $K_{2}$ are of the same knot mosaic
type. \ In other words, knot mosaic type is a complete invariant of tame knots.
\end{conj}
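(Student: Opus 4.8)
The plan is to route everything through Reidemeister's theorem, which serves as the bridge between the continuous setting of tame knot theory and the discrete setting of mosaics. I would first fix a geometric realization map $\Phi$ sending each knot mosaic to a piecewise-linear knot (or link) diagram in the plane, obtained by replacing each mosaic tile by the arc-segment it depicts and gluing along shared edges. Since every tame knot admits a diagram on a sufficiently fine grid, $\Phi$ is surjective onto a class of diagrams rich enough to represent every tame knot type. The conjecture then splits into a \emph{soundness} direction (mosaic equivalence implies equality of knot type) and a \emph{completeness} direction (equality of knot type implies mosaic equivalence), and it suffices to establish both for the realized diagrams.

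For soundness, I would reduce the claim to the generators of the mosaic ambient group $\mathbb{A}$. Each generator is a local move supported on a small submosaic, and under $\Phi$ such a move induces on the realized diagram either a planar isotopy or one of the three Reidemeister moves. As this is a finite list of generators, the verification is a finite case analysis; compatibility of $\Phi$ with composition of moves then promotes this to the statement that every element of $\mathbb{A}$ preserves the underlying knot type, giving one implication of the biconditional.

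The completeness direction carries the real content. Given $k_1$ and $k_2$ of the same knot type with arbitrarily chosen mosaic representatives $K_1,K_2 \in \mathbb{K}$, Reidemeister's theorem supplies a finite sequence of planar isotopies and Reidemeister moves connecting $\Phi(K_1)$ to $\Phi(K_2)$. The task is to lift this continuous sequence to an element of $\mathbb{A}$ carrying $K_1$ to $K_2$. I would proceed one move at a time: for each Reidemeister move, occurring in some small disk of the diagram, one must (a) enlarge the board via the standard mosaic inclusion $\mathbb{K}^{(n)} \hookrightarrow \mathbb{K}^{(n+1)}$ to manufacture free space, (b) apply planar mosaic moves to route the relevant strands into the rigid local pattern demanded by the corresponding mosaic Reidemeister move, (c) apply that move, and (d) undo the routing. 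One must separately check that two mosaic representatives of the same realized diagram are already mosaic-equivalent, so that the arbitrariness of $K_1,K_2$ is immaterial.

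The hard part will be step (b). Because mosaic tiles are of fixed size while a continuous Reidemeister move is supported on an arbitrarily small disk at an arbitrary location, there is a genuine tension between the rigidity of the grid and the flexibility of the continuous moves. Controlling the routing uniformly — guaranteeing that sufficient room can always be created by board enlargement together with planar moves, no matter how the strands are locally configured near the site of the move — is the crux of the argument, and is precisely what makes the faithful simulation of the Reidemeister moves by elements of $\mathbb{A}$ nontrivial.
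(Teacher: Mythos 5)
Your proposal correctly locates the difficulty, but it does not resolve it: in the completeness direction everything is reduced to your step (b) --- routing the strands of an arbitrary mosaic into the rigid local pattern demanded by a mosaic Reidemeister move, after creating room with the injection $\iota$ --- and you then state that this step is the crux without supplying any argument for it. As written, this is a plan rather than a proof: the entire content of the conjecture is exactly the tension you name between continuous moves supported on arbitrarily small disks at arbitrary locations and the fixed-size, fixed-position tiles of the lattice. The same remark applies to your side condition that two mosaic representatives of the same realized diagram are already mosaic-equivalent; that is a statement of discrete planar isotopy which is no easier than the Reidemeister-lifting problem itself, and it is also left unproven.

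The paper circumvents this difficulty entirely by never lifting Reidemeister moves. It routes through grid diagrams instead: (i) Dynnikov's theorem (Theorem \ref{Dy}) states that any two grid diagrams of the same link are connected by the elementary grid moves (cyclic permutation, commutation, (de)stabilization); (ii) grid diagrams embed into knot mosaics, and each elementary move is realized by a short explicit sequence of mosaic moves (Proposition \ref{Ku1}) --- a finite verification, feasible precisely because the elementary moves are already rigid and lattice-adapted, unlike Reidemeister moves; (iii) the $5\times$ zoom lemma shows that every knot mosaic is mosaic-equivalent to one arising from a grid diagram of the same knot (Proposition \ref{Ku2}). Point (iii) plays the role of your unproven routing step, and point (i) is where all of the continuous-to-discrete work is outsourced to an existing deep theorem. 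To complete your approach you would have to prove a discrete analogue of planar isotopy for mosaics from scratch; the paper's detour through $\mathbb{G}$ and Dynnikov's theorem is precisely a device for never having to do that.
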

The goal of this paper is to prove the conjecture.

\begin{thm}[Main Theorem]{\label{LK}}
The Lomonaco-Kauffman conjecture is true.
\end{thm}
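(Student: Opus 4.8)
The plan is to exhibit the equivalence of the two formal systems by constructing a pair of maps between tame knot (or link) types and knot mosaic types and showing that each descends to the appropriate equivalence classes and that the two are mutually inverse. Write $\Psi$ for the map that reads off the underlying link diagram, hence the tame knot type, from a knot mosaic $K\in\mathbb{K}$, and write $\Phi$ for the map that places a regular diagram of a tame knot onto a sufficiently fine square grid to produce a knot mosaic. The conjecture is then equivalent to two assertions: \emph{soundness}, that applying a generator of the mosaic ambient group $\mathbb{A}$ does not change the knot type, so that $\Psi$ is well defined on knot mosaic types; and \emph{completeness}, that ambient-isotopic knots admit mosaic representatives lying in a single $\mathbb{A}$-orbit, so that $\Phi$ is well defined on knot types. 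Together with the observation that $\Phi$ and $\Psi$ are mutually inverse on equivalence classes, these yield the claimed bijection.

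First I would dispatch soundness, the easy ``if'' direction. The group $\mathbb{A}$ is generated by the finitely many planar-isotopy mosaic moves together with the mosaic Reidemeister moves of types $1$, $2$, and $3$. For each generator one checks directly that the underlying diagram before and after the move is related either by a planar isotopy (for the planar-isotopy generators) or by the corresponding Reidemeister move. Since planar isotopy and Reidemeister moves preserve the ambient isotopy class of the link, any two mosaics in the same $\mathbb{A}$-orbit represent the same tame knot, so $\Psi$ factors through knot mosaic type.

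The heart of the argument is completeness. By Reidemeister's theorem, if $k_1$ and $k_2$ are the same tame knot then any two of their regular diagrams $D_1$ and $D_2$ are connected by a finite sequence of planar isotopies and Reidemeister moves, and I would realize this sequence on the mosaic level in three stages. \textbf{(i) Discretization:} every link diagram can be isotoped in the plane to lie on a square grid so that each cell carries one of the allowed mosaic tiles; this is a standard rectilinear gridding argument. \textbf{(ii) Independence of the gridding:} any two mosaics arising from planar-isotopic diagrams differ by a composition of planar-isotopy mosaic moves, possibly after enlarging the board through mosaic injection (padding with empty tiles), so that $\Phi$ is insensitive to the choices in (i). \textbf{(iii) Local realization:} each Reidemeister move is supported in a small disk of the diagram; after refining the grid so that this disk meets a bounded block of cells surrounded by a collar of empty tiles, the move matches one of the finitely many local patterns defining the mosaic Reidemeister moves, while the surrounding strands are accommodated by planar-isotopy moves. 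Concatenating these realizations produces an element of $\mathbb{A}$ carrying $K_1$ to $K_2$.

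The main obstacle will be the rigidity of the grid in stage (iii): in the continuous setting the Reidemeister move and the isotopies around it are performed freely, whereas on a mosaic everything must align to cell boundaries and remain confined to a finite board. The crux is therefore a uniform \emph{refinement-and-localization} lemma guaranteeing, after subdividing cells (itself expressible through mosaic moves and injections), that the active region is isolated inside a sufficiently large empty collar, that the finitely many generator patterns suffice to model every local configuration arising after refinement, and that the necessary board enlargements do not obstruct the connecting sequence. Once this localization is in place, soundness and completeness combine to show that $\Phi$ and $\Psi$ induce mutually inverse bijections between tame knot types and knot mosaic types, which is exactly the Lomonaco--Kauffman conjecture.
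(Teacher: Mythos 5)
Your soundness direction (mosaic moves preserve knot type, so reading off the underlying diagram is well defined on $\mathbb{A}$-orbits) is fine and matches the easy implication in the paper. The genuine gap is in your completeness direction: everything is made to rest on a ``uniform refinement-and-localization lemma'' which you name but never prove, and that lemma \emph{is} the conjecture. Stage (ii) -- that two mosaics obtained by gridding planar-isotopic diagrams lie in one orbit of the planar-isotopy mosaic moves -- and stage (iii) -- that grid refinement is ``itself expressible through mosaic moves and injections'' and that every local configuration arising near a Reidemeister disk matches one of the finitely many generator patterns -- are exactly the points where the rigidity of the grid bites, and neither admits a one-line argument. Note that even the refinement claim alone is delicate: the paper proves only one specific instance of it (the $5\times$ zoom lemma, showing $z_{5\times}(K)\sim K$), and that already requires a dedicated argument with translation and contraction of segments. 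A proposal that isolates the hard step, labels it a lemma, and then asserts the theorem follows ``once this localization is in place'' has not proved the theorem.

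It is worth seeing how the paper sidesteps the difficulty you ran into, because the route is genuinely different. Instead of discretizing Reidemeister's theorem, the paper routes everything through grid diagrams: by Dynnikov's theorem (Theorem \ref{Dy}), any two grid diagrams of the same link are connected by a finite list of purely combinatorial elementary moves (cyclic permutation, commutation, stabilization/destabilization). The continuous-to-combinatorial passage -- the part you were forced to handle with your unproved lemma -- is thus entirely absorbed into a known theorem. What remains is finite and checkable: (a) grid diagrams embed into knot mosaics and each elementary move is realized by a short explicit sequence of mosaic moves (Proposition \ref{Ku1}, verified pattern by pattern in Section \ref{grid_mosaic_move}); and (b) every knot mosaic is equivalent to one in the grid-diagram subset $\mathbb{G}$, via the $5\times$ zoom, which eliminates the tiles $T_7,T_8,T_9$ and lets one perturb to a genuine grid diagram (Proposition \ref{Ku2}). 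Then $K_1\sim G(K_1)\sim G_1\sim G_2\sim G(K_2)\sim K_2$. If you want to salvage your direct approach, the refinement-and-localization lemma would have to be proved in full; the grid-diagram detour exists precisely because that is hard.
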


The paper is organized as follows. In Section \ref{knot_mosaic}, \ref{mosaic_move}, \ref{knot_type}, we review some notations related to knot mosaic. In Section \ref{grid_diagram}, we recall the definition of the grid diagram. In Section \ref{grid_mosaic_diagram}, \ref{grid_mosaic_move}, we will see a relationship between knot mosaic and grid diagram. In Section \ref{zoom}, we define $5 \times$ zoom map. Section \ref{proof} is devoted to proof of the Lomonaco-Kauffman conjecture. In Appendix, we give a list of $\mathbb{K}^{(n)}/\mathbb{A}(n) \ (n \leq 4)$. Using this, we can know the mosaic number of some knots.\\

{\bf Acknowledgment.}
I would like to thank many people for their help with this paper, especially, to Professor Osamu Saeki for his encouragement.

\section{Knot Mosaic}{\label {knot_mosaic}}
Let $\mathbb{T}^{(u)}$ denote the set of the following 11 symbols
\begin{figure}[ht]
\begin{tabular}{cccccccccc}
\begin{minipage}{1.0cm}
\includegraphics[width=\hsize]{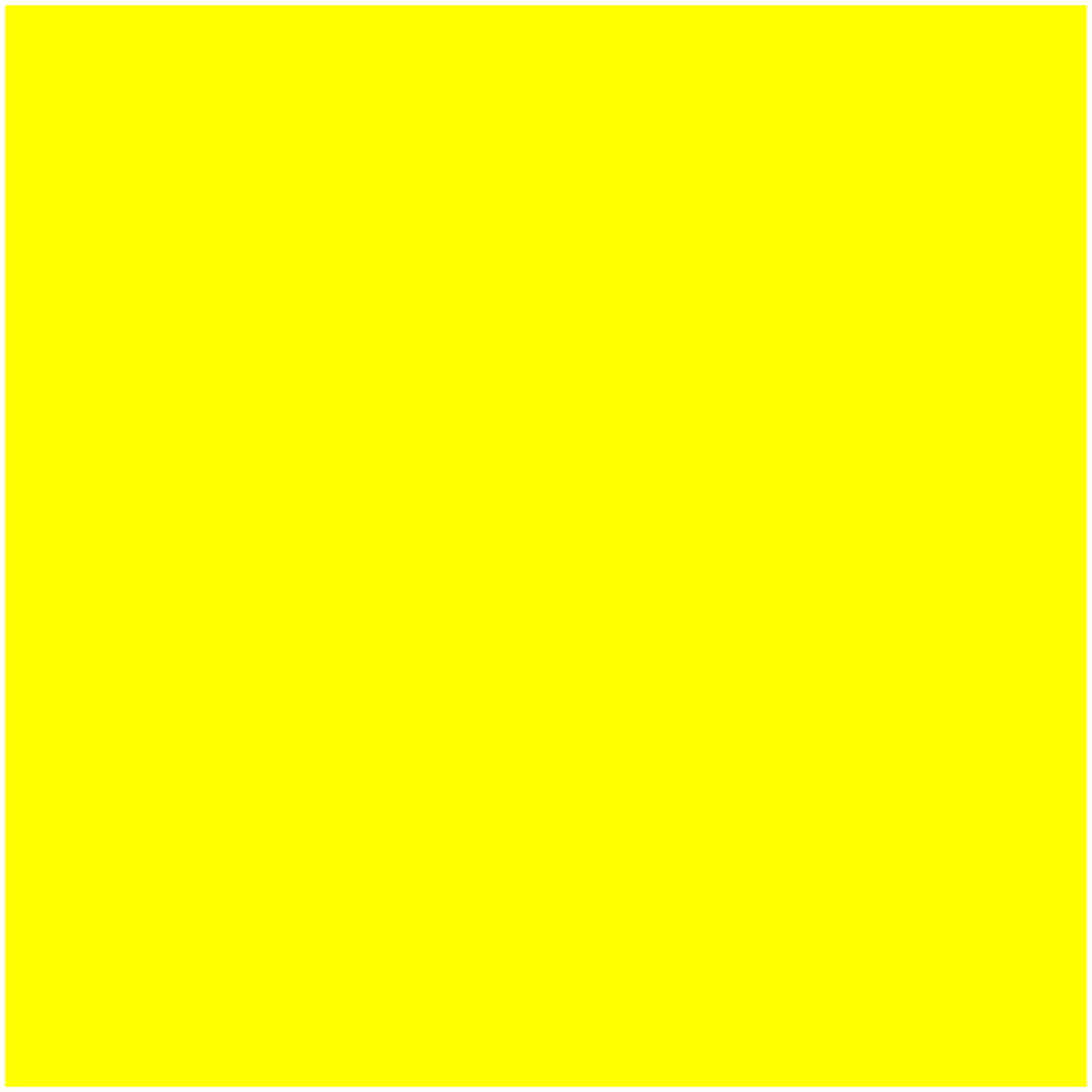}
\end{minipage}
\begin{minipage}{1.0cm}
\includegraphics[width=\hsize]{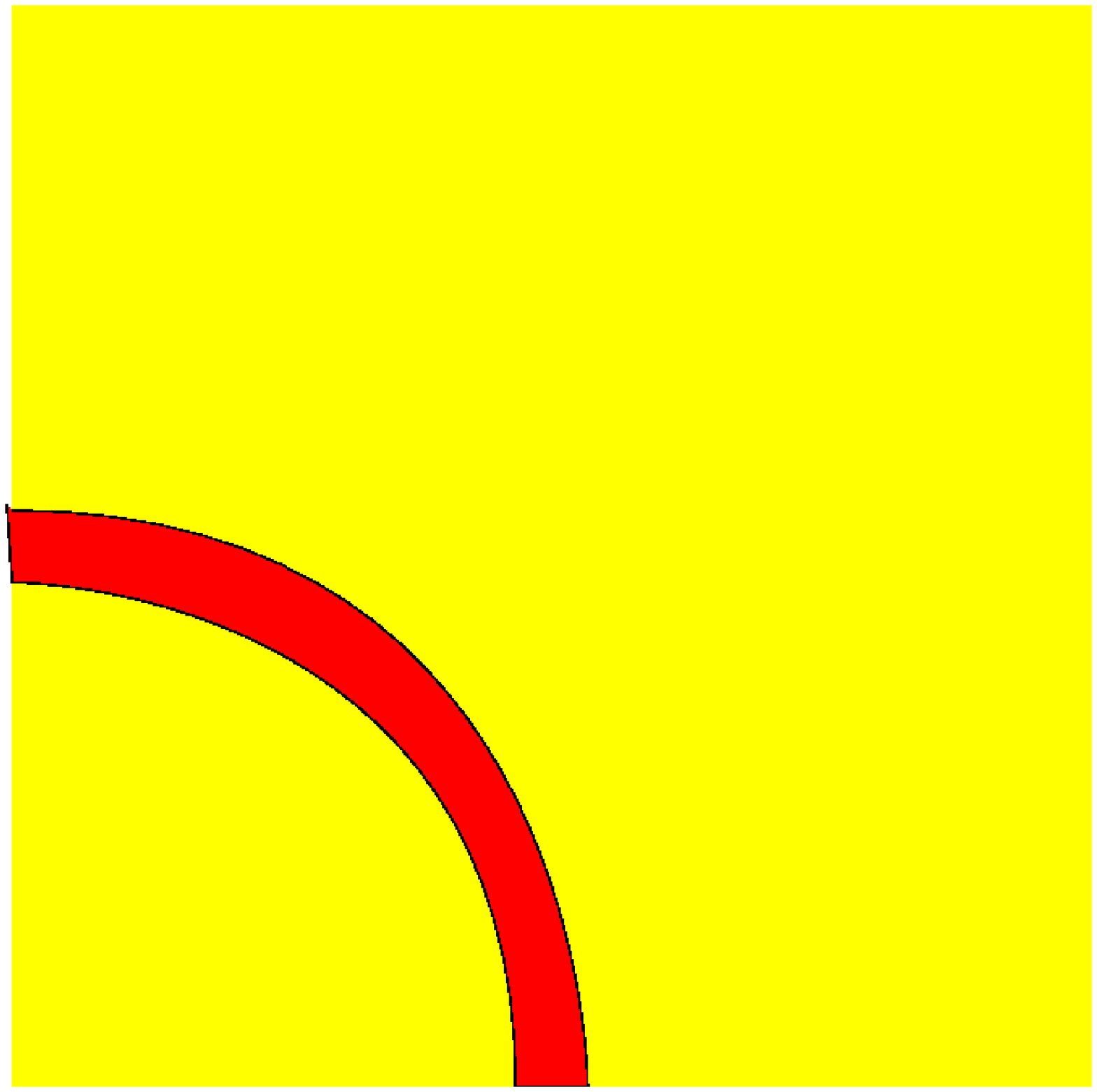}
\end{minipage}
\begin{minipage}{1.0cm}
\includegraphics[width=\hsize]{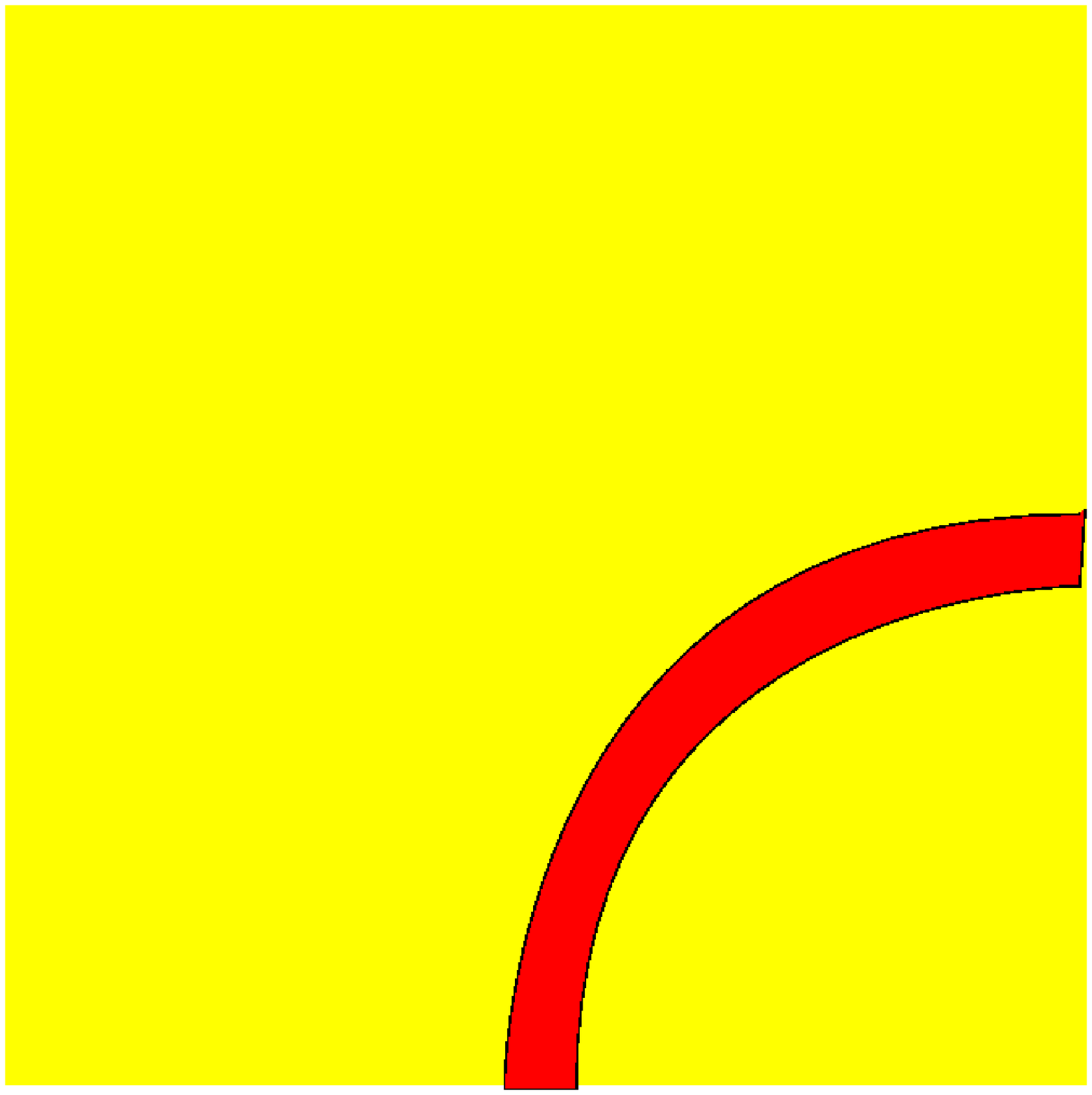}
\end{minipage}
\begin{minipage}{1.0cm}
\includegraphics[width=\hsize]{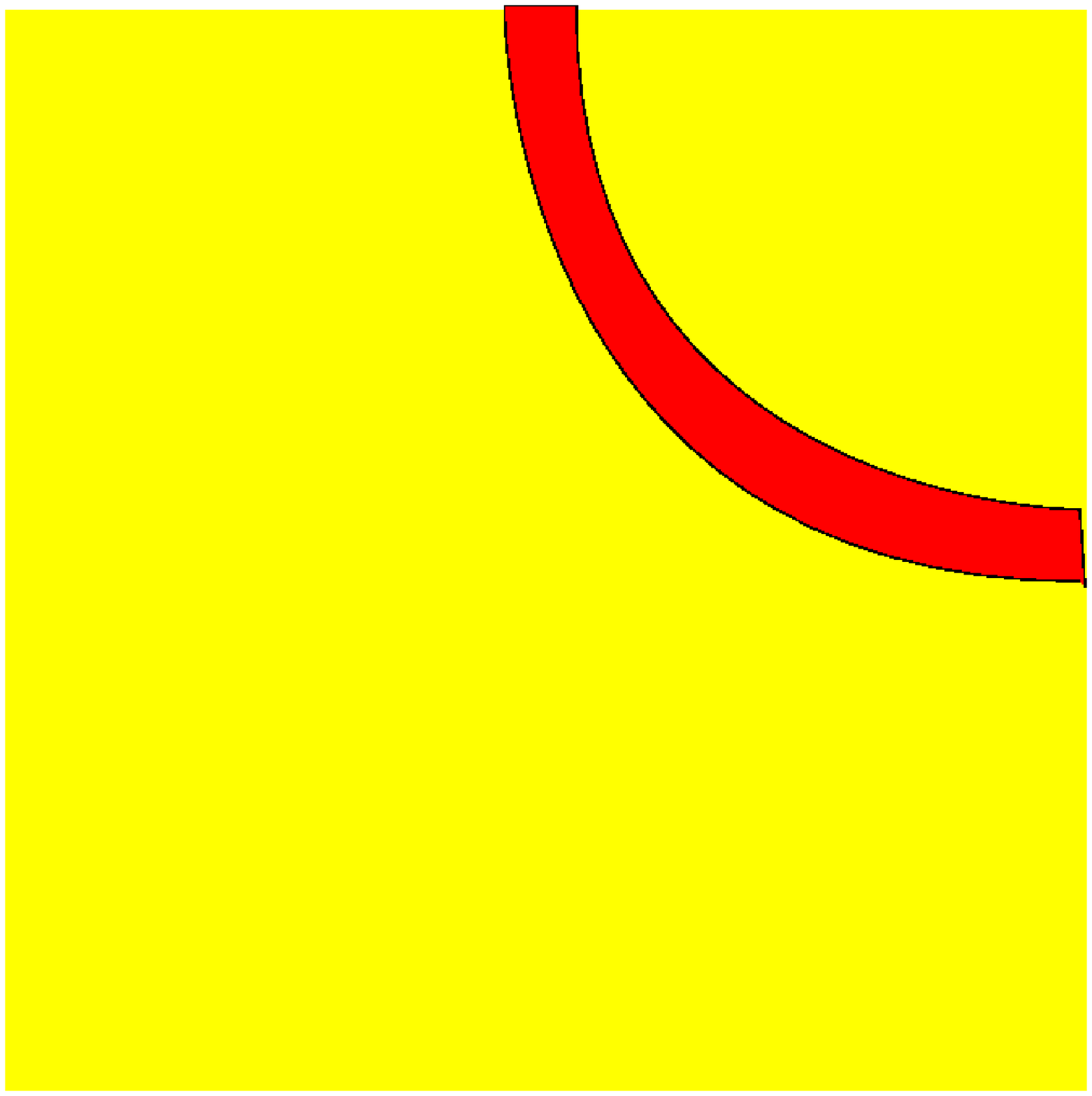}
\end{minipage}
\begin{minipage}{1.0cm}
\includegraphics[width=\hsize]{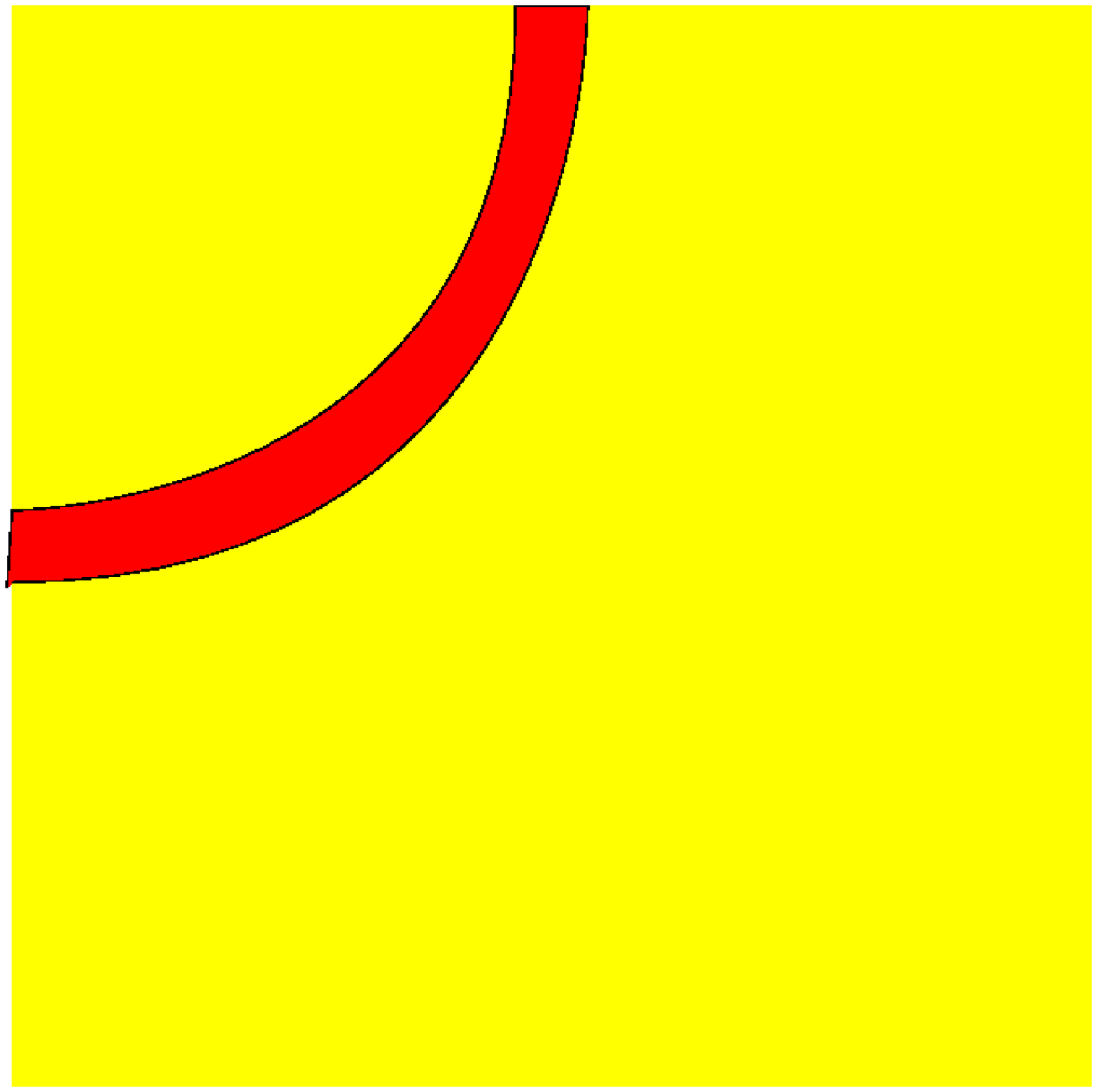}
\end{minipage}
\begin{minipage}{1.0cm}
\includegraphics[width=\hsize]{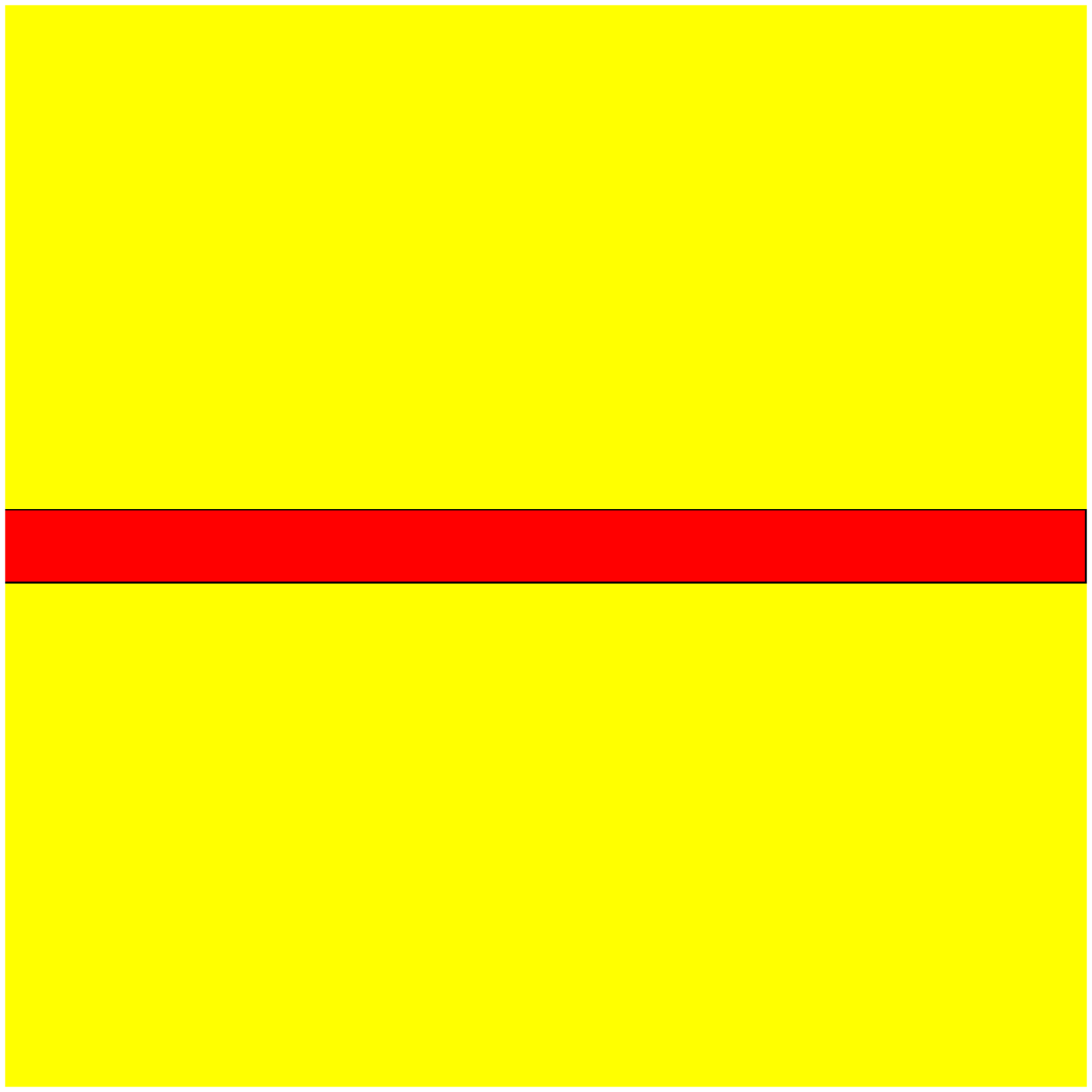}
\end{minipage}
\begin{minipage}{1.0cm}
\includegraphics[width=\hsize]{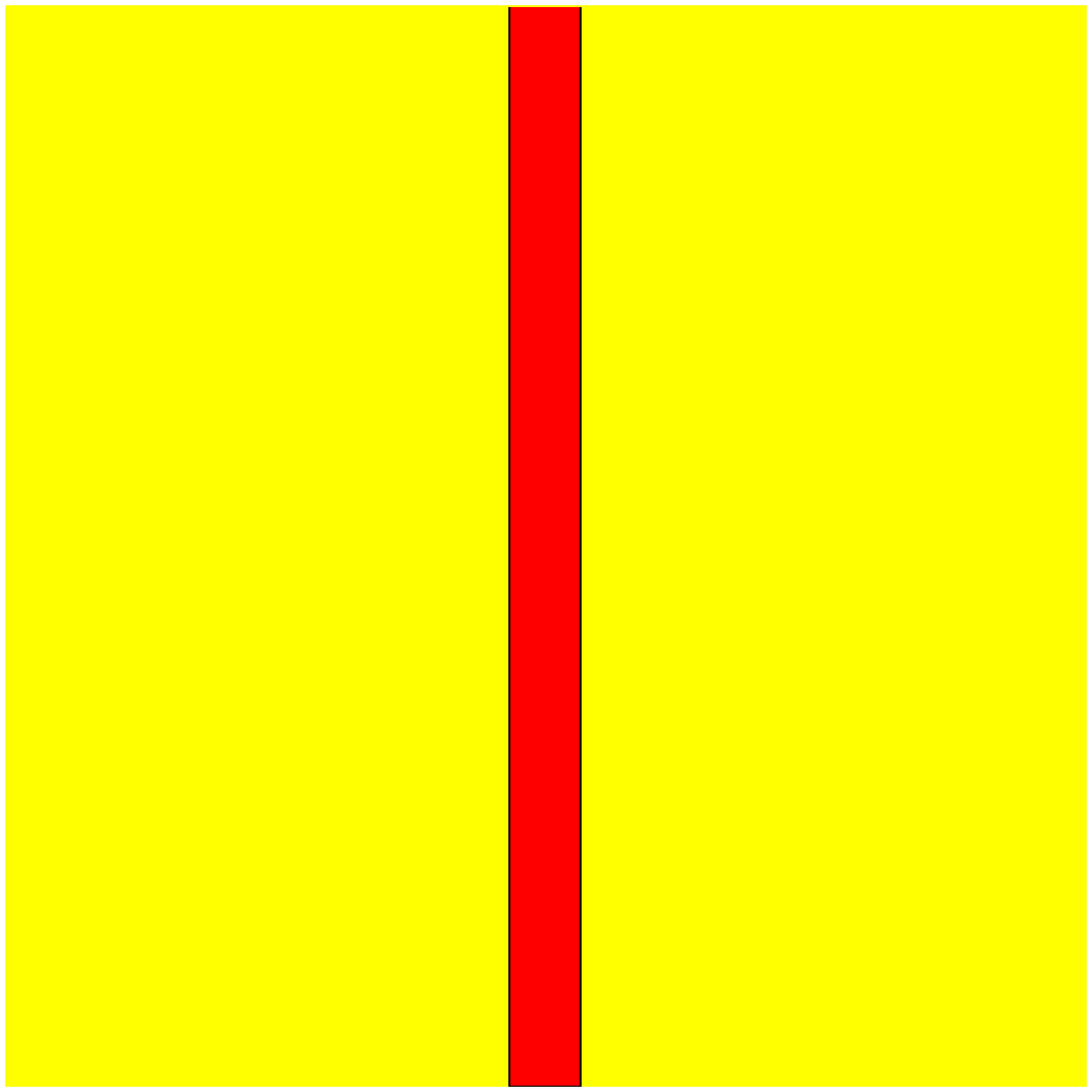}
\end{minipage}
\begin{minipage}{1.0cm}
\includegraphics[width=\hsize]{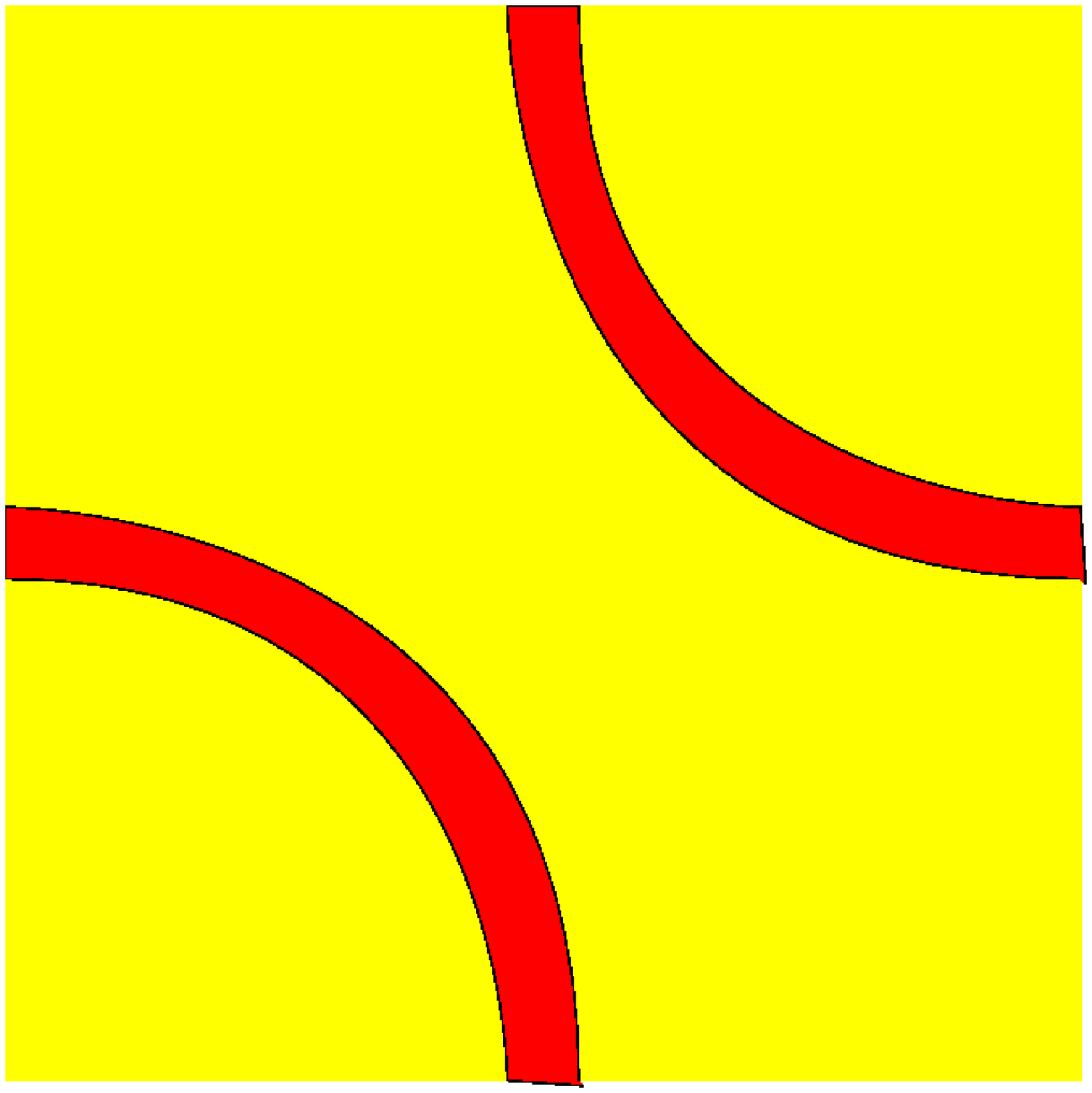}
\end{minipage}
\begin{minipage}{1.0cm}
\includegraphics[width=\hsize]{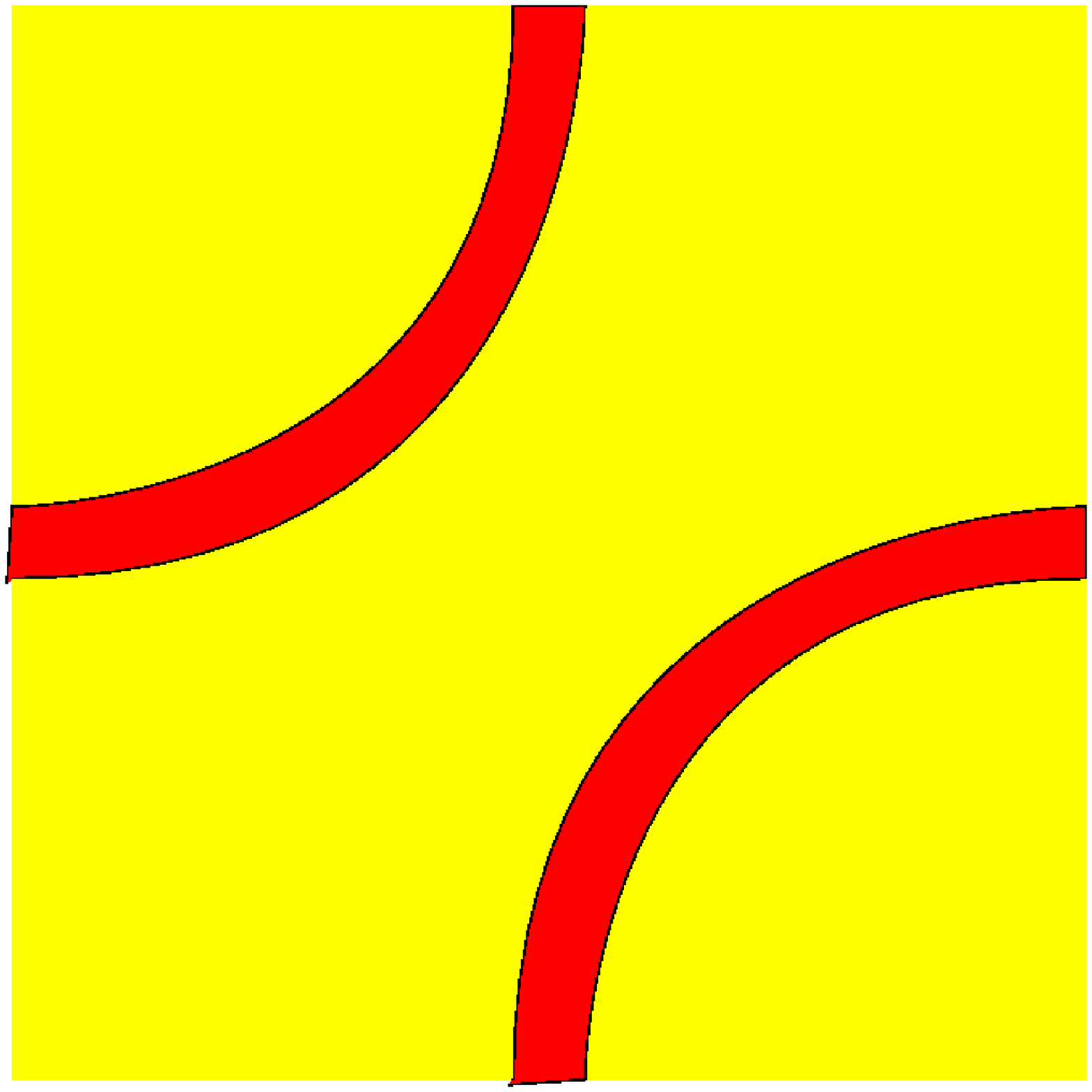}
\end{minipage}
\begin{minipage}{1.0cm}
\includegraphics[width=\hsize]{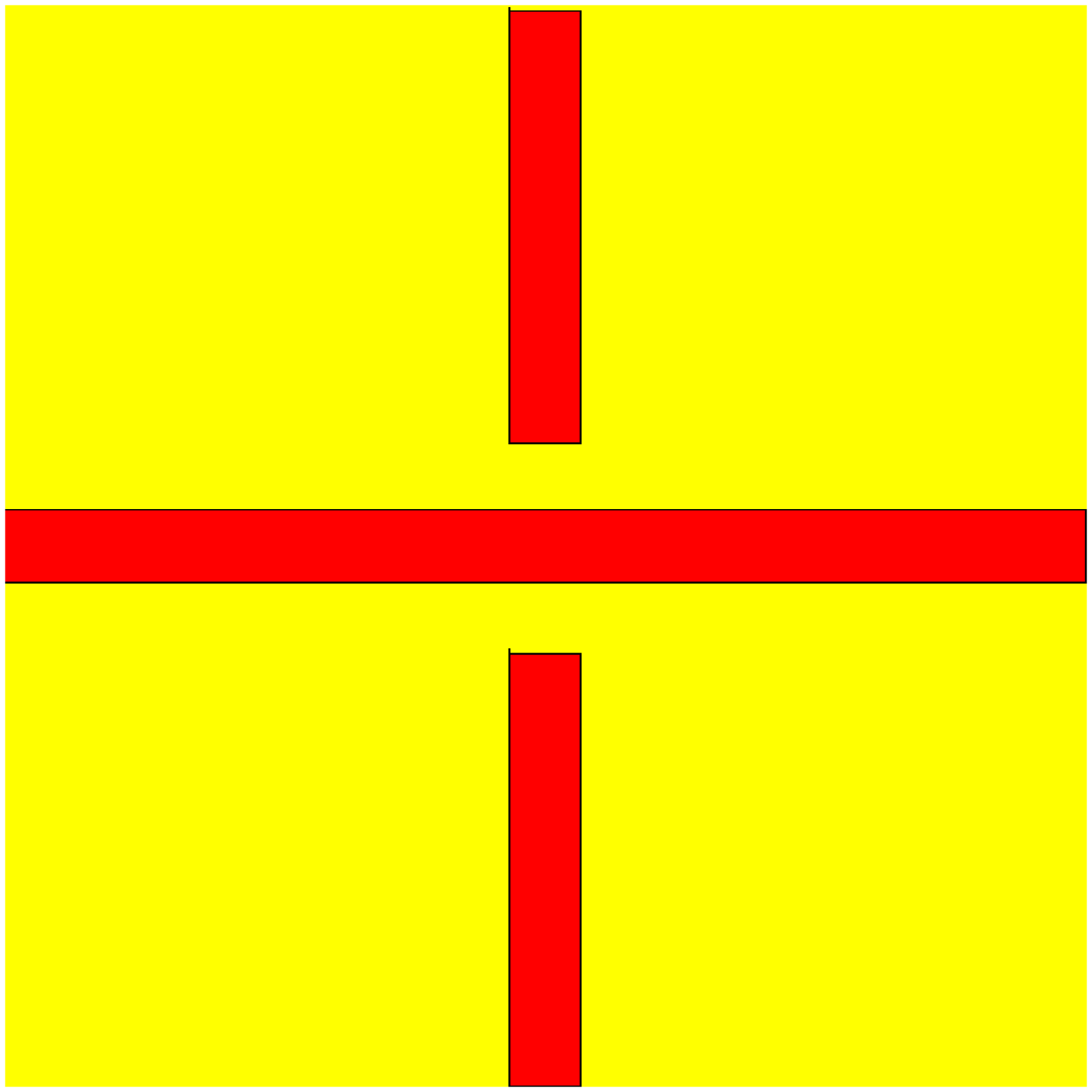}
\end{minipage}
\begin{minipage}{1.0cm}
\includegraphics[width=\hsize]{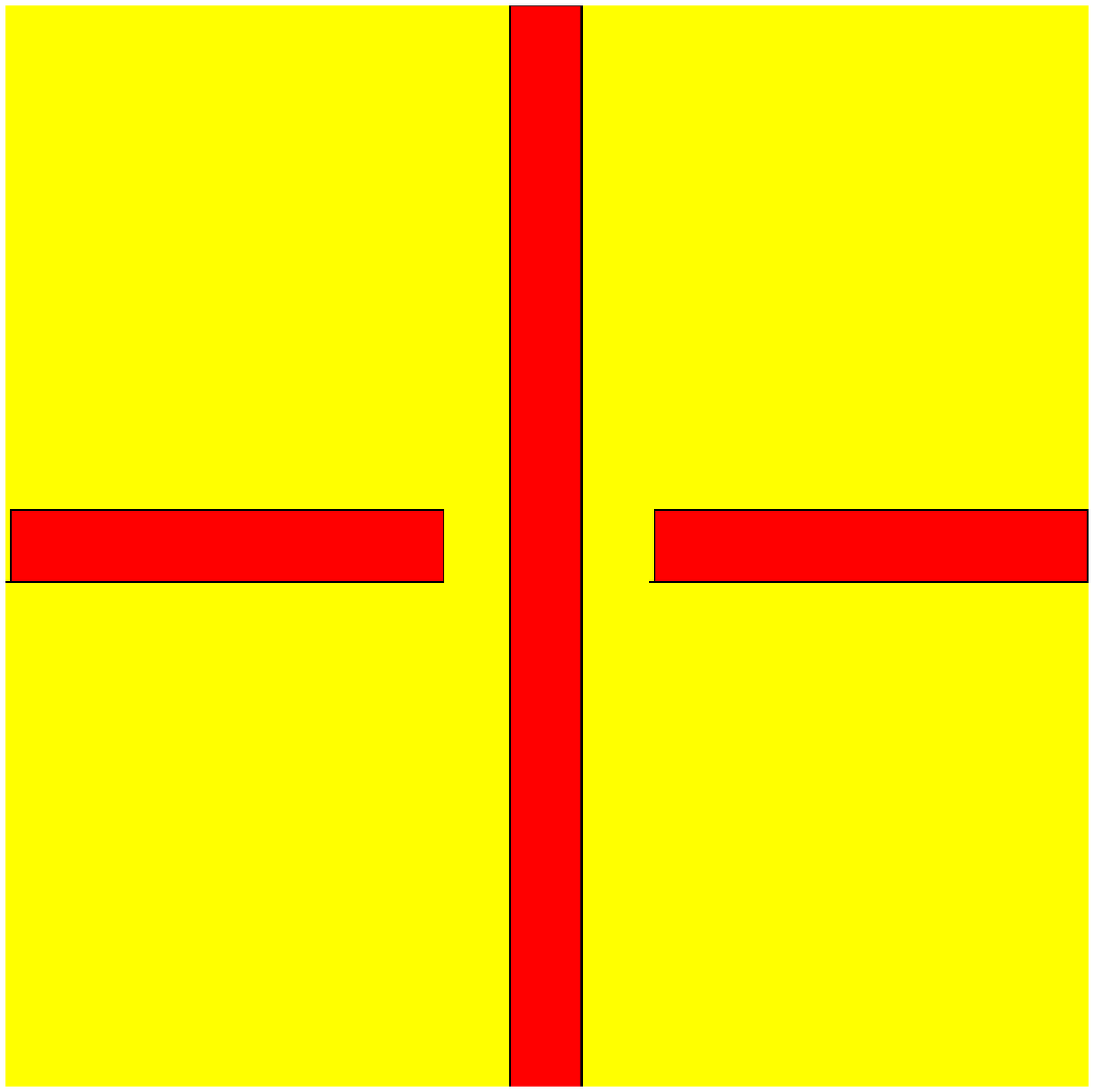}
\end{minipage}
\end{tabular}
\end{figure}
called (unoriented) tiles. We denote these tiles respectively by
the following symbols $T_0$ , $T_1$ , $T_2$ , $T_3$ , $T_4$ , $T_5$ , $T_6$ , $T_7$ , $T_8$ , $T_9$ , $T_{10}$.

\begin{defn}
Let $n$ be a positive integer. We define an (unoriented) $n$-mosaic
as an $n \times n$ matrix $M = (M_{ij}) = (T_{k(i,j)})$
of (unoriented) tiles with rows and columns indexed from $0$ to $n-1$. We denote the set of $n$-mosaics by $\mathbb{M}^{(n)}$.
\end{defn} 

\begin{defn}
A connection point of a tile is a midpoint of an edge which is also the endpoint of a curve drawn on a tile.

\begin{description}
\item[(1)] {\bf 0 connection point}
\begin{figure}[ht]
\begin{tabular}{c}
\begin{minipage}{1.0cm}
\includegraphics[width=\hsize]{T_0.eps}
\end{minipage}
\end{tabular}
\end{figure}

\item[(2)] {\bf 2 connection points}

\begin{figure}[ht]
\begin{tabular}{cccccc}
\begin{minipage}{1.0cm}
\includegraphics[width=\hsize]{T_1.eps}
\end{minipage}
\begin{minipage}{1.0cm}
\includegraphics[width=\hsize]{T_2.eps}
\end{minipage}
\begin{minipage}{1.0cm}
\includegraphics[width=\hsize]{T_3.eps}
\end{minipage}
\begin{minipage}{1.0cm}
\includegraphics[width=\hsize]{T_4.eps}
\end{minipage}
\begin{minipage}{1.0cm}
\includegraphics[width=\hsize]{T_5.eps}
\end{minipage}
\begin{minipage}{1.0cm}
\includegraphics[width=\hsize]{T_6.eps}
\end{minipage}
\end{tabular}
\end{figure}

\item[(3)] {\bf 4 connection points}

\begin{figure}[ht]
\begin{tabular}{cccc}
\begin{minipage}{1.0cm}
\includegraphics[width=\hsize]{T_7.eps}
\end{minipage}
\begin{minipage}{1.0cm}
\includegraphics[width=\hsize]{T_8.eps}
\end{minipage}
\begin{minipage}{1.0cm}
\includegraphics[width=\hsize]{T_9.eps}
\end{minipage}
\begin{minipage}{1.0cm}
\includegraphics[width=\hsize]{T_10.eps}
\end{minipage}
\end{tabular}
\end{figure}
\end{description}

\end{defn}

\begin{defn}
A tile in a mosaic is said to be suitably connected if all its connection points touch the connection points of contiguous tiles.
\end{defn}

\begin{defn}
An (unoriented) knot $n$-mosaic is a mosaic in which all tiles are suitably connected. We let $\mathbb{K}^{(n)}$ denote the subset of $\mathbb{M}^{(n)}$ of all knot $n$-mosaics. For example,

\begin{figure}[ht]
\begin{minipage}{3.5cm}
\includegraphics[width=\hsize]{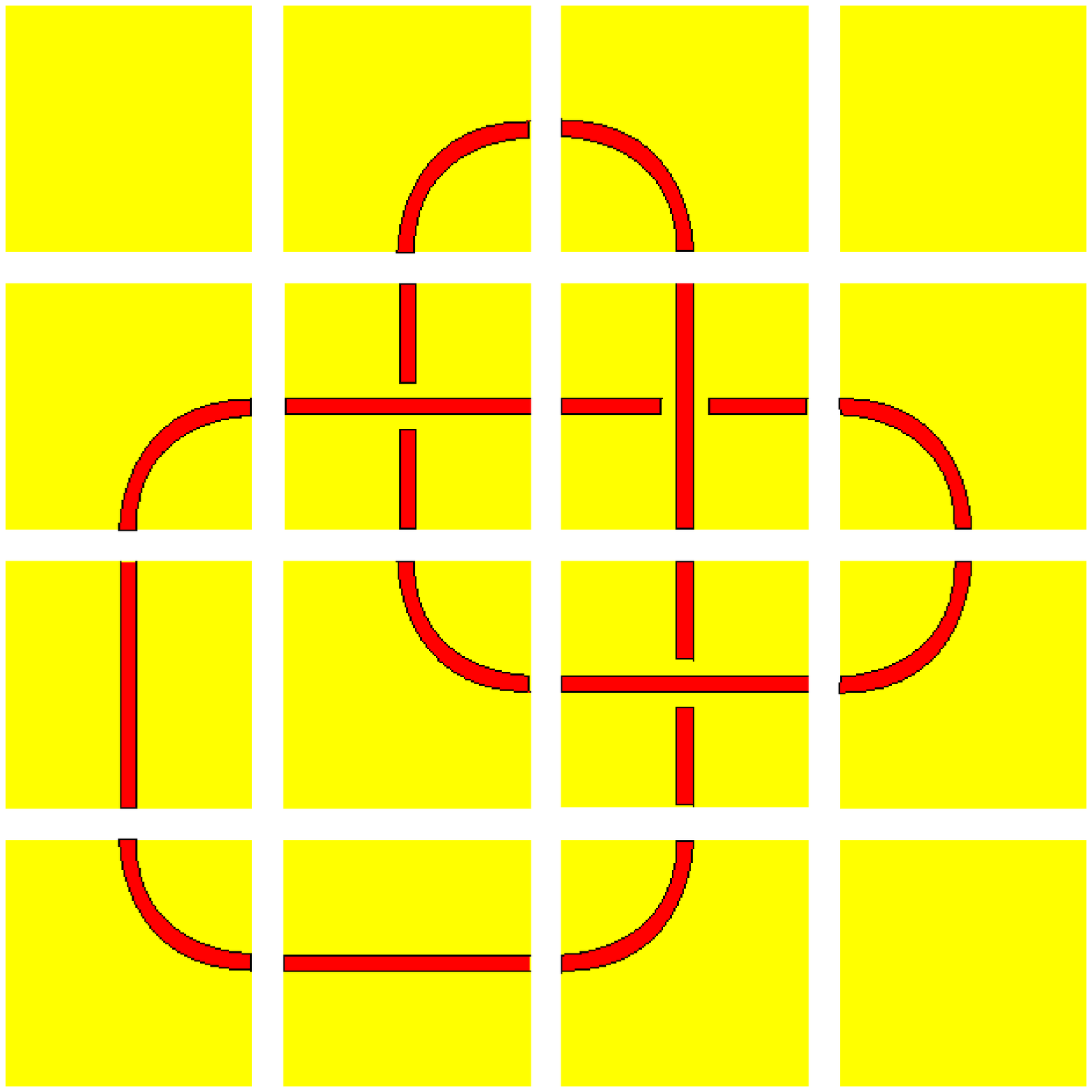}
\end{minipage}
$=
\begin{pmatrix}
 T_0 & T_2 & T_1 & T_0 \\
 T_2 & T_9 & T_{10} & T_1 \\
 T_6 & T_3 & T_9 & T_4 \\
 T_3 & T_5 & T_4 & T_0
\end{pmatrix}
=
\begin{pmatrix}
 0 & 2 & 1 & 0 \\
 2 & 9 & 10 & 1 \\
 6 & 3 & 9 & 4 \\
 3 & 5 & 4 & 0
\end{pmatrix}
\in \mathbb{K}^{(4)}$
\end{figure}

\end{defn}

\section{Mosaic moves}{\label {mosaic_move}}

\begin{defn}
Let $k$ and $n$ be positive integers such that $k\leq n$. A $k$-mosaic $N$ is
said to be a $k$\textbf{-submosaic} of an $n$-mosaic $M$ if it is a $k\times
k$ submatrix of $M$. \ The $k$-submosaic $N$ is said to be at
\textbf{location} $\left(  i,j\right)  $ in the $n$-mosaic $M$ if the top left
entry of $N$ lies in row $i$ and column $j$ of $M$. Let $\mathbf{M}^{\left(
k:i,j\right)  }$ denote the $k$\textbf{-submosaic of }$\mathbf{M}$\textbf{ at
location }$(i,j)$.
\end{defn}

\begin{defn}
Let $k$ and $n$ be positive integers such that $k\leq n$. \ For any two
$k$-mosaics $N$ and $N^{\prime}$, we define a $\mathbf{k}$\textbf{-move at
location }$\left(  \mathbf{i,j}\right)  $ on the set of $n$-mosaics
$\mathbb{M}^{(n)}$, denoted by
\[
N\overset{\left(  i,j\right)  }{\longleftrightarrow}N^{\prime}\text{ ,}%
\]
as the map from $\mathbb{M}^{(n)}$ to $\mathbb{M}^{(n)}$ defined by%
\[
\left(  N\overset{\left(  i,j\right)  }{\longleftrightarrow}N^{\prime}\right)
\left(  M\right)  =\left\{
\begin{array}
[c]{ll}%
M\text{ with }M^{\left(  k:i,j\right)  }\text{ replaced by }N^{\prime} &
\text{if }M^{\left(  k:i,j\right)  }=N\\
M\text{ with }M^{\left(  k:i,j\right)  }\text{ replaced by }N & \text{if
}M^{\left(  k:i,j\right)  }=N^{\prime}\\
M & \text{otherwise}%
\end{array}
\right.
\]

\end{defn}

\begin{pro}{\label{LK1}}
Each $k$-move $N\overset{\left(  i,j\right)  }{\longleftrightarrow}N^{\prime}$
is a permutation of $\mathbb{M}^{(n)}$. \ In fact, it is a permutation which
is the product of disjoint transpositions.
\end{pro}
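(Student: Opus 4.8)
The plan is to establish the two assertions in turn. Write $\phi := \left(N \overset{(i,j)}{\longleftrightarrow} N'\right) \colon \mathbb{M}^{(n)} \to \mathbb{M}^{(n)}$. For the first assertion I would show that $\phi$ is an \emph{involution}, i.e.\ $\phi \circ \phi = \mathrm{id}_{\mathbb{M}^{(n)}}$; since an involution is its own two-sided inverse, it is automatically a bijection and hence a permutation. The verification is a direct check against the three branches of the defining formula, using the fact that $\phi$ alters no tile of $M$ outside the $k \times k$ block at location $(i,j)$, so that the single datum $M^{(k:i,j)}$ determines which branch applies. If $M^{(k:i,j)} = N$ then $\phi(M)$ has $(i,j)$-submosaic $N'$, so the second branch returns $\phi(\phi(M)) = M$; the case $M^{(k:i,j)} = N'$ is symmetric; and if $M^{(k:i,j)}$ is neither $N$ nor $N'$ then $\phi$ fixes $M$ twice over. (When $N = N'$ the first two branches coincide and $\phi = \mathrm{id}$.)

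For the second assertion, I note first that being an involution already forces $\phi$ to be a product of disjoint transpositions, since any involution pairs each non-fixed point with its unique image. To exhibit these transpositions explicitly I would use the product decomposition $\mathbb{M}^{(n)} \cong \{\text{block content } B\} \times \{\text{remainder } R\}$ obtained by recording, for each $M$, its block $B = M^{(k:i,j)}$ together with all remaining tiles $R$; because $\mathbb{M}^{(n)}$ is the set of \emph{all} $n \times n$ tile arrays, every pair $(B,R)$ occurs and $\phi$ preserves the $R$-coordinate. Fixing $R$ and assuming $N \neq N'$, the map swaps $(N,R) \leftrightarrow (N',R)$ and fixes every other mosaic with that remainder, so on this fibre $\phi$ restricts to the single transposition $\tau_R = \big((N,R)\ (N',R)\big)$. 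Distinct remainders give disjoint fibres, whence $\phi = \prod_R \tau_R$ is a product of pairwise disjoint transpositions (the empty product when $N = N'$).

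I do not anticipate a genuine obstacle; the content is entirely bookkeeping. The only point meriting care is that $\phi$ is defined on the full mosaic set $\mathbb{M}^{(n)}$ and not on the knot mosaics $\mathbb{K}^{(n)}$: this is exactly what lets me factor $\mathbb{M}^{(n)}$ as block $\times$ remainder with no suitable-connectivity constraint linking the two, guaranteeing that both $(N,R)$ and $(N',R)$ really lie in $\mathbb{M}^{(n)}$ for every $R$ and are therefore available to be transposed.
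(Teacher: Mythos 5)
Your proof is correct and complete. There is, however, nothing in the paper to compare it against: Proposition \ref{LK1} is stated without proof (it is imported from Lomonaco and Kauffman \cite{LK}), so your argument fills a gap rather than parallels an existing one. Both steps are sound. The involution check correctly exploits the fact that every branch of the defining formula leaves all tiles outside the $k\times k$ block at location $(i,j)$ unchanged, so after one application the block alone determines which branch fires on the second application, giving $\phi\circ\phi=\mathrm{id}$ and hence bijectivity. The explicit decomposition into disjoint transpositions via the factorization of $\mathbb{M}^{(n)}$ into block content times remainder is legitimate for exactly the reason you flag: $\mathbb{M}^{(n)}$ is the set of \emph{all} $n\times n$ tile matrices, with no suitable-connectedness constraint coupling the block to its complement, so every pair $(B,R)$ occurs and each fibre over a remainder $R$ contributes precisely the transposition $\bigl((N,R)\ (N',R)\bigr)$ when $N\neq N'$. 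Your treatment of the degenerate case $N=N'$, where the move is the identity and the product of transpositions is empty, is also handled correctly; had you overlooked it, the statement ``product of disjoint transpositions'' would still have needed that convention.
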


\begin{defn}
As an analog to the planar isotopy moves for standard knot diagrams, we define
for mosaics the 11 \textbf{mosaic planar isotopy moves} given below:

\[
\begin{array}
[c]{cc}%
{\includegraphics[
height=0.3269in,
width=0.3269in
]%
{T_2.eps}
}%
&
{\includegraphics[
height=0.3269in,
width=0.3269in
]%
{T_4.eps}
}%
\\%
{\includegraphics[
height=0.3269in,
width=0.3269in
]%
{T_4.eps}%
}%
&
{\includegraphics[
height=0.3269in,
width=0.3269in
]%
{T_0.eps}%
}%
\end{array}
\Longleftrightarrow _{P_1}
\begin{array}
[c]{cc}%
{\includegraphics[
height=0.3269in,
width=0.3269in
]%
{T_0.eps}
}%
&
{\includegraphics[
height=0.3269in,
width=0.3269in
]%
{T_6.eps}
}%
\\%
{\includegraphics[
height=0.3269in,
width=0.3269in
]%
{T_5.eps}%
}%
&
{\includegraphics[
height=0.3269in,
width=0.3269in
]%
{T_4.eps}%
}%
\end{array}
\]

\[
\begin{array}
[c]{cc}%
{\includegraphics[
height=0.3269in,
width=0.3269in
]%
{T_5.eps}
}%
&
{\includegraphics[
height=0.3269in,
width=0.3269in
]%
{T_1.eps}
}%
\\%
{\includegraphics[
height=0.3269in,
width=0.3269in
]%
{T_2.eps}%
}%
&
{\includegraphics[
height=0.3269in,
width=0.3269in
]%
{T_4.eps}%
}%
\end{array}
\Longleftrightarrow _{P_2}
\begin{array}
[c]{cc}%
{\includegraphics[
height=0.3269in,
width=0.3269in
]%
{T_1.eps}
}%
&
{\includegraphics[
height=0.3269in,
width=0.3269in
]%
{T_0.eps}
}%
\\%
{\includegraphics[
height=0.3269in,
width=0.3269in
]%
{T_6.eps}%
}%
&
{\includegraphics[
height=0.3269in,
width=0.3269in
]%
{T_0.eps}%
}%
\end{array}
\]

\[
\begin{array}
[c]{cc}%
{\includegraphics[
height=0.3269in,
width=0.3269in
]%
{T_2.eps}
}%
&
{\includegraphics[
height=0.3269in,
width=0.3269in
]%
{T_1.eps}
}%
\\%
{\includegraphics[
height=0.3269in,
width=0.3269in
]%
{T_4.eps}%
}%
&
{\includegraphics[
height=0.3269in,
width=0.3269in
]%
{T_6.eps}%
}%
\end{array}
\Longleftrightarrow _{P_3}
\begin{array}
[c]{cc}%
{\includegraphics[
height=0.3269in,
width=0.3269in
]%
{T_0.eps}
}%
&
{\includegraphics[
height=0.3269in,
width=0.3269in
]%
{T_0.eps}
}%
\\%
{\includegraphics[
height=0.3269in,
width=0.3269in
]%
{T_5.eps}%
}%
&
{\includegraphics[
height=0.3269in,
width=0.3269in
]%
{T_1.eps}%
}%
\end{array}
\]

\[
\begin{array}
[c]{cc}%
{\includegraphics[
height=0.3269in,
width=0.3269in
]%
{T_0.eps}
}%
&
{\includegraphics[
height=0.3269in,
width=0.3269in
]%
{T_2.eps}
}%
\\%
{\includegraphics[
height=0.3269in,
width=0.3269in
]%
{T_5.eps}%
}%
&
{\includegraphics[
height=0.3269in,
width=0.3269in
]%
{T_4.eps}%
}%
\end{array}
\Longleftrightarrow _{P_4}
\begin{array}
[c]{cc}%
{\includegraphics[
height=0.3269in,
width=0.3269in
]%
{T_2.eps}
}%
&
{\includegraphics[
height=0.3269in,
width=0.3269in
]%
{T_5.eps}
}%
\\%
{\includegraphics[
height=0.3269in,
width=0.3269in
]%
{T_4.eps}%
}%
&
{\includegraphics[
height=0.3269in,
width=0.3269in
]%
{T_0.eps}%
}%
\end{array}
\]

\[
\begin{array}
[c]{cc}%
{\includegraphics[
height=0.3269in,
width=0.3269in
]%
{T_0.eps}
}%
&
{\includegraphics[
height=0.3269in,
width=0.3269in
]%
{T_0.eps}
}%
\\%
{\includegraphics[
height=0.3269in,
width=0.3269in
]%
{T_5.eps}%
}%
&
{\includegraphics[
height=0.3269in,
width=0.3269in
]%
{T_5.eps}%
}%
\end{array}
\Longleftrightarrow _{P_5}
\begin{array}
[c]{cc}%
{\includegraphics[
height=0.3269in,
width=0.3269in
]%
{T_2.eps}
}%
&
{\includegraphics[
height=0.3269in,
width=0.3269in
]%
{T_1.eps}
}%
\\%
{\includegraphics[
height=0.3269in,
width=0.3269in
]%
{T_4.eps}%
}%
&
{\includegraphics[
height=0.3269in,
width=0.3269in
]%
{T_3.eps}%
}%
\end{array}
\]

\[
\begin{array}
[c]{cc}%
{\includegraphics[
height=0.3269in,
width=0.3269in
]%
{T_1.eps}
}%
&
{\includegraphics[
height=0.3269in,
width=0.3269in
]%
{T_0.eps}
}%
\\%
{\includegraphics[
height=0.3269in,
width=0.3269in
]%
{T_4.eps}%
}%
&
{\includegraphics[
height=0.3269in,
width=0.3269in
]%
{T_0.eps}%
}%
\end{array}
\Longleftrightarrow _{P_6}
\begin{array}
[c]{cc}%
{\includegraphics[
height=0.3269in,
width=0.3269in
]%
{T_5.eps}
}%
&
{\includegraphics[
height=0.3269in,
width=0.3269in
]%
{T_1.eps}
}%
\\%
{\includegraphics[
height=0.3269in,
width=0.3269in
]%
{T_5.eps}%
}%
&
{\includegraphics[
height=0.3269in,
width=0.3269in
]%
{T_4.eps}%
}%
\end{array}
\]

\[
\begin{array}
[c]{cc}%
{\includegraphics[
height=0.3269in,
width=0.3269in
]%
{T_0.eps}
}%
&
{\includegraphics[
height=0.3269in,
width=0.3269in
]%
{T_0.eps}
}%
\\%
{\includegraphics[
height=0.3269in,
width=0.3269in
]%
{T_1.eps}%
}%
&
{\includegraphics[
height=0.3269in,
width=0.3269in
]%
{T_0.eps}%
}%
\end{array}
\Longleftrightarrow _{P_7}
\begin{array}
[c]{cc}%
{\includegraphics[
height=0.3269in,
width=0.3269in
]%
{T_2.eps}
}%
&
{\includegraphics[
height=0.3269in,
width=0.3269in
]%
{T_1.eps}
}%
\\%
{\includegraphics[
height=0.3269in,
width=0.3269in
]%
{T_8.eps}%
}%
&
{\includegraphics[
height=0.3269in,
width=0.3269in
]%
{T_4.eps}%
}%
\end{array}
\]

\[
\begin{array}
[c]{cc}%
{\includegraphics[
height=0.3269in,
width=0.3269in
]%
{T_2.eps}
}%
&
{\includegraphics[
height=0.3269in,
width=0.3269in
]%
{T_8.eps}
}%
\\%
{\includegraphics[
height=0.3269in,
width=0.3269in
]%
{T_9.eps}%
}%
&
{\includegraphics[
height=0.3269in,
width=0.3269in
]%
{T_4.eps}%
}%
\end{array}
\Longleftrightarrow _{P_8}
\begin{array}
[c]{cc}%
{\includegraphics[
height=0.3269in,
width=0.3269in
]%
{T_2.eps}
}%
&
{\includegraphics[
height=0.3269in,
width=0.3269in
]%
{T_9.eps}
}%
\\%
{\includegraphics[
height=0.3269in,
width=0.3269in
]%
{T_8.eps}%
}%
&
{\includegraphics[
height=0.3269in,
width=0.3269in
]%
{T_4.eps}%
}%
\end{array}
\]

\[
\begin{array}
[c]{cc}%
{\includegraphics[
height=0.3269in,
width=0.3269in
]%
{T_2.eps}
}%
&
{\includegraphics[
height=0.3269in,
width=0.3269in
]%
{T_8.eps}
}%
\\%
{\includegraphics[
height=0.3269in,
width=0.3269in
]%
{T_10.eps}%
}%
&
{\includegraphics[
height=0.3269in,
width=0.3269in
]%
{T_4.eps}%
}%
\end{array}
\Longleftrightarrow _{P_9}
\begin{array}
[c]{cc}%
{\includegraphics[
height=0.3269in,
width=0.3269in
]%
{T_2.eps}
}%
&
{\includegraphics[
height=0.3269in,
width=0.3269in
]%
{T_10.eps}
}%
\\%
{\includegraphics[
height=0.3269in,
width=0.3269in
]%
{T_8.eps}%
}%
&
{\includegraphics[
height=0.3269in,
width=0.3269in
]%
{T_4.eps}%
}%
\end{array}
\]

\[
\begin{array}
[c]{cc}%
{\includegraphics[
height=0.3269in,
width=0.3269in
]%
{T_7.eps}
}%
&
{\includegraphics[
height=0.3269in,
width=0.3269in
]%
{T_9.eps}
}%
\\%
{\includegraphics[
height=0.3269in,
width=0.3269in
]%
{T_3.eps}%
}%
&
{\includegraphics[
height=0.3269in,
width=0.3269in
]%
{T_4.eps}%
}%
\end{array}
\Longleftrightarrow _{P_{10}}
\begin{array}
[c]{cc}%
{\includegraphics[
height=0.3269in,
width=0.3269in
]%
{T_10.eps}
}%
&
{\includegraphics[
height=0.3269in,
width=0.3269in
]%
{T_8.eps}
}%
\\%
{\includegraphics[
height=0.3269in,
width=0.3269in
]%
{T_3.eps}%
}%
&
{\includegraphics[
height=0.3269in,
width=0.3269in
]%
{T_4.eps}%
}%
\end{array}
\]

\[
\begin{array}
[c]{cc}%
{\includegraphics[
height=0.3269in,
width=0.3269in
]%
{T_7.eps}
}%
&
{\includegraphics[
height=0.3269in,
width=0.3269in
]%
{T_10.eps}
}%
\\%
{\includegraphics[
height=0.3269in,
width=0.3269in
]%
{T_3.eps}%
}%
&
{\includegraphics[
height=0.3269in,
width=0.3269in
]%
{T_4.eps}%
}%
\end{array}
\Longleftrightarrow _{P_{11}}
\begin{array}
[c]{cc}%
{\includegraphics[
height=0.3269in,
width=0.3269in
]%
{T_9.eps}
}%
&
{\includegraphics[
height=0.3269in,
width=0.3269in
]%
{T_8.eps}
}%
\\%
{\includegraphics[
height=0.3269in,
width=0.3269in
]%
{T_3.eps}%
}%
&
{\includegraphics[
height=0.3269in,
width=0.3269in
]%
{T_4.eps}%
}%
\end{array}
\]

\end{defn}

\begin{defn}
As an analog to the Reidemeister moves for standard knot diagrams, we create
for mosaics the \textbf{mosaic Reidemeister moves}. 

The \textbf{mosaic Reidemeister 1 moves} are the following:
\[
\begin{array}
[c]{cc}%
{\includegraphics[
height=0.3269in,
width=0.3269in
]%
{T_2.eps}
}%
&
{\includegraphics[
height=0.3269in,
width=0.3269in
]%
{T_1.eps}
}%
\\%
{\includegraphics[
height=0.3269in,
width=0.3269in
]%
{T_8.eps}%
}%
&
{\includegraphics[
height=0.3269in,
width=0.3269in
]%
{T_4.eps}%
}%
\end{array}
\Longleftrightarrow _{R_1}
\begin{array}
[c]{cc}%
{\includegraphics[
height=0.3269in,
width=0.3269in
]%
{T_2.eps}
}%
&
{\includegraphics[
height=0.3269in,
width=0.3269in
]%
{T_1.eps}
}%
\\%
{\includegraphics[
height=0.3269in,
width=0.3269in
]%
{T_10.eps}%
}%
&
{\includegraphics[
height=0.3269in,
width=0.3269in
]%
{T_4.eps}%
}%
\end{array}
\]
and similar moves (see {\cite{LK}} for detail).\\

the \textbf{mosaic Reidemeister 2 moves} are given below:
\[
\begin{array}
[c]{cc}%
{\includegraphics[
height=0.3269in,
width=0.3269in
]%
{T_7.eps}%
}%
&
{\includegraphics[
height=0.3269in,
width=0.3269in
]%
{T_1.eps}%
}%
\\%
{\includegraphics[
height=0.3269in,
width=0.3269in
]%
{T_8.eps}%
}%
&
{\includegraphics[
height=0.3269in,
width=0.3269in
]%
{T_4.eps}%
}%
\end{array}
\Longleftrightarrow _{R_2}
\begin{array}
[c]{cc}%
{\includegraphics[
height=0.3269in,
width=0.3269in
]%
{T_10.eps}%
}%
&
{\includegraphics[
height=0.3269in,
width=0.3269in
]%
{T_1.eps}%
}%
\\%
{\includegraphics[
height=0.3269in,
width=0.3269in
]%
{T_10.eps}%
}%
&
{\includegraphics[
height=0.3269in,
width=0.3269in
]%
{T_4.eps}%
}%
\end{array}
\]

the \textbf{mosaic Reidemeister 3 moves} are given below:
\[
\begin{array}
[c]{ccc}%
{\includegraphics[
height=0.3269in,
width=0.3269in
]%
{T_0.eps}
}%
&
{\includegraphics[
height=0.3269in,
width=0.3269in
]%
{T_6.eps}
}%
&
{\includegraphics[
height=0.3269in,
width=0.3269in
]%
{T_2.eps}
}%
\\%
{\includegraphics[
height=0.3269in,
width=0.3269in
]%
{T_5.eps}%
}%
&
{\includegraphics[
height=0.3269in,
width=0.3269in
]%
{T_10.eps}%
}%
&
{\includegraphics[
height=0.3269in,
width=0.3269in
]%
{T_10.eps}%
}%
\\%
{\includegraphics[
height=0.3269in,
width=0.3269in
]%
{T_5.eps}%
}%
&
{\includegraphics[
height=0.3269in,
width=0.3269in
]%
{T_10.eps}%
}%
&
{\includegraphics[
height=0.3269in,
width=0.3269in
]%
{T_4.eps}%
}%
\end{array}
\Longleftrightarrow _{R_3}
\begin{array}
[c]{ccc}%
{\includegraphics[
height=0.3269in,
width=0.3269in
]%
{T_2.eps}
}%
&
{\includegraphics[
height=0.3269in,
width=0.3269in
]%
{T_10.eps}
}%
&
{\includegraphics[
height=0.3269in,
width=0.3269in
]%
{T_5.eps}
}%
\\%
{\includegraphics[
height=0.3269in,
width=0.3269in
]%
{T_10.eps}%
}%
&
{\includegraphics[
height=0.3269in,
width=0.3269in
]%
{T_10.eps}%
}%
&
{\includegraphics[
height=0.3269in,
width=0.3269in
]%
{T_5.eps}%
}%
\\%
{\includegraphics[
height=0.3269in,
width=0.3269in
]%
{T_4.eps}%
}%
&
{\includegraphics[
height=0.3269in,
width=0.3269in
]%
{T_6.eps}%
}%
&
{\includegraphics[
height=0.3269in,
width=0.3269in
]%
{T_0.eps}%
}%
\end{array}
\]

\end{defn}

The planar isotopy moves and the Reidemeister moves lie in the permutation group of the set of mosaics.
\ It easily follows that the planar isotopy moves and the Reidemeister moves
also lie in the group of all permutations of the set of knot mosaics
$\mathbb{K}^{(n)}$. \ Hence, we can make the following definition:

\begin{defn}
We define the (\textbf{knot mosaic}) \textbf{ambient group} $\mathbb{A}(n)$ as
the group of all permutations of the set of knot $n$-mosaics $\mathbb{K}%
^{(n)}$ generated by the mosaic planar isotopy and the mosaic Reidemeister
moves. \ 
\end{defn}

\section{Knot mosaic type}{\label {knot_type}}

We define the \textbf{mosaic injection }%
\[%
\begin{array}
[c]{rrr}%
\iota:\mathbb{M}^{(n)} & \longrightarrow & \mathbb{M}^{(n+1)}\\
M^{(n)} & \longmapsto & M^{(n+1)}%
\end{array}
\]
as%
\[
M_{ij}^{(n+1)}=\left\{
\begin{array}
[c]{cl}%
M_{ij}^{(n)} & \text{if }0\leq i,j<n\\
& \\
\raisebox{-0.1003in}{\includegraphics[
height=0.3269in,
width=0.3269in
]%
{T_0.eps}%
}%
& \text{otherwise}%
\end{array}
\right.
\]

Thus,
\[
M^{(n)}=%
\begin{array}
[c]{cccc}%
{\includegraphics[
height=0.3269in,
width=0.3269in
]%
{T_0.eps}
}%
&
{\includegraphics[
height=0.3269in,
width=0.3269in
]%
{T_2.eps}
}%
&
{\includegraphics[
height=0.3269in,
width=0.3269in
]%
{T_1.eps}
}%
&
{\includegraphics[
height=0.3269in,
width=0.3269in
]%
{T_0.eps}%
}%
\\%
{\includegraphics[
height=0.3269in,
width=0.3269in
]%
{T_2.eps}%
}%
&
{\includegraphics[
height=0.3269in,
width=0.3269in
]%
{T_9.eps}%
}%
&
{\includegraphics[
height=0.3269in,
width=0.3269in
]%
{T_10.eps}%
}%
&
{\includegraphics[
height=0.3269in,
width=0.3269in
]%
{T_1.eps}%
}%
\\%
{\includegraphics[
height=0.3269in,
width=0.3269in
]%
{T_6.eps}%
}%
&
{\includegraphics[
height=0.3269in,
width=0.3269in
]%
{T_3.eps}%
}%
&
{\includegraphics[
height=0.3269in,
width=0.3269in
]%
{T_9.eps}%
}%
&
{\includegraphics[
height=0.3269in,
width=0.3269in
]%
{T_4.eps}%
}%
\\%
{\includegraphics[
height=0.3269in,
width=0.3269in
]%
{T_3.eps}%
}%
&
{\includegraphics[
height=0.3269in,
width=0.3269in
]%
{T_5.eps}%
}%
&
{\includegraphics[
height=0.3269in,
width=0.3269in
]%
{T_4.eps}%
}%
&
{\includegraphics[
height=0.3269in,
width=0.3269in
]%
{T_0.eps}%
}%
\end{array}
\overset{\iota}{\longrightarrow}M^{(n+1)}=%
\begin{array}
[c]{ccccc}%

{\includegraphics[
height=0.3269in,
width=0.3269in
]%
{T_0.eps}%
}%
&
{\includegraphics[
height=0.3269in,
width=0.3269in
]%
{T_2.eps}%
}%
&
{\includegraphics[
height=0.3269in,
width=0.3269in
]%
{T_1.eps}%
}%
&
{\includegraphics[
height=0.3269in,
width=0.3269in
]%
{T_0.eps}%
}%
&
{\includegraphics[
height=0.3269in,
width=0.3269in
]%
{T_0.eps}%
}%

\\%

{\includegraphics[
height=0.3269in,
width=0.3269in
]%
{T_2.eps}%
}%

&
{\includegraphics[
height=0.3269in,
width=0.3269in
]%
{T_9.eps}%
}%

&
{\includegraphics[
height=0.3269in,
width=0.3269in
]%
{T_10.eps}%
}%

&
{\includegraphics[
height=0.3269in,
width=0.3269in
]%
{T_1.eps}%
}%
&
{\includegraphics[
height=0.3269in,
width=0.3269in
]%
{T_0.eps}%
}%
\\%

{\includegraphics[
height=0.3269in,
width=0.3269in
]%
{T_6.eps}%
}%

&
{\includegraphics[
height=0.3269in,
width=0.3269in
]%
{T_3.eps}%
}%

&
{\includegraphics[
height=0.3269in,
width=0.3269in
]%
{T_9.eps}%
}%

&

{\includegraphics[
height=0.3269in,
width=0.3269in
]%
{T_4.eps}%
}%

&

{\includegraphics[
height=0.3269in,
width=0.3269in
]%
{T_0.eps}%
}%

\\%

{\includegraphics[
height=0.3269in,
width=0.3269in
]%
{T_3.eps}%
}%

&

{\includegraphics[
height=0.3269in,
width=0.3269in
]%
{T_5.eps}%
}%

&

{\includegraphics[
height=0.3269in,
width=0.3269in
]%
{T_4.eps}%
}%
&

{\includegraphics[
height=0.3269in,
width=0.3269in
]%
{T_0.eps}%
}%
&

{\includegraphics[
height=0.3269in,
width=0.3269in
]%
{T_0.eps}%
}%
\\%

{\includegraphics[
height=0.3269in,
width=0.3269in
]%
{T_0.eps}%
}%
&

{\includegraphics[
height=0.3269in,
width=0.3269in
]%
{T_0.eps}%
}%
&
{\includegraphics[
height=0.3269in,
width=0.3269in
]%
{T_0.eps}%
}%
&
{\includegraphics[
height=0.3269in,
width=0.3269in
]%
{T_0.eps}%
}%
&
{\includegraphics[
height=0.3269in,
width=0.3269in
]%
{T_0.eps}%
}%
\end{array}
\]

\begin{defn}
Two $n$-mosaics $M$ and $N$ are said to be of the \textbf{same knot }%
$n$\textbf{-type}, written%
\[
M\underset{n}{\sim}N\text{ ,}%
\]
provided there is an element of the ambient isotopy group $\mathbb{A}(n)$
which transforms $M$ into $N$.
\end{defn}

\bigskip

\begin{defn}
An $m$-mosaic $M$ and an $n$-mosaic $N$ are said to be of the\textbf{ same
knot mosaic type}, written%
\[
M\sim N\text{ ,}%
\]
provided there exists a non-negative integer $\ell$ such that, if $m\leq n$,
then
\[
\iota^{\ell+n-m}M\sim_{\ell+n}\iota^{\ell}N\text{ ,}%
\]
or if $m>n$, then
\[
\iota^{\ell}M\sim_{\ell+m}\iota^{\ell+m-n}N\text{ ,}%
\]
where, for each non-negative integer $p$, $\iota^{p}$ denotes the $p$-fold
composition $\underset{p}{\underbrace{\iota\circ\iota\circ\cdots\circ\iota}}$ .
\end{defn}

\begin{exa}

Let 
\[
K_1=%
\begin{array}
[c]{ccc}%
{\includegraphics[
height=0.3269in,
width=0.3269in
]%
{T_2.eps}
}%
&
{\includegraphics[
height=0.3269in,
width=0.3269in
]%
{T_1.eps}
}%
&
{\includegraphics[
height=0.3269in,
width=0.3269in
]%
{T_0.eps}
}%
\\%
{\includegraphics[
height=0.3269in,
width=0.3269in
]%
{T_3.eps}%
}%
&
{\includegraphics[
height=0.3269in,
width=0.3269in
]%
{T_8.eps}%
}%
&
{\includegraphics[
height=0.3269in,
width=0.3269in
]%
{T_1.eps}%
}%
\\%
{\includegraphics[
height=0.3269in,
width=0.3269in
]%
{T_0.eps}%
}%
&
{\includegraphics[
height=0.3269in,
width=0.3269in
]%
{T_3.eps}%
}%
&
{\includegraphics[
height=0.3269in,
width=0.3269in
]%
{T_4.eps}%
}%
\end{array}
, \ K_2=%
\begin{array}
[c]{ccc}%
{\includegraphics[
height=0.3269in,
width=0.3269in
]%
{T_0.eps}
}%
&
{\includegraphics[
height=0.3269in,
width=0.3269in
]%
{T_2.eps}
}%
&
{\includegraphics[
height=0.3269in,
width=0.3269in
]%
{T_1.eps}
}%
\\%
{\includegraphics[
height=0.3269in,
width=0.3269in
]%
{T_2.eps}%
}%
&
{\includegraphics[
height=0.3269in,
width=0.3269in
]%
{T_7.eps}%
}%
&
{\includegraphics[
height=0.3269in,
width=0.3269in
]%
{T_4.eps}%
}%
\\%
{\includegraphics[
height=0.3269in,
width=0.3269in
]%
{T_3.eps}%
}%
&
{\includegraphics[
height=0.3269in,
width=0.3269in
]%
{T_4.eps}%
}%
&
{\includegraphics[
height=0.3269in,
width=0.3269in
]%
{T_0.eps}%
}%
\end{array}
\]

, then $K_1 \not\sim_3 K_2$ but $K_1 \sim K_2 \ (\iota K_1 \sim_4 \iota K_2)$.

\[
\iota K_1=%
\begin{array}
[c]{cccc}%
{\includegraphics[
height=0.3in,
width=0.3in
]%
{T_2.eps}
}%
&
{\includegraphics[
height=0.3in,
width=0.3in
]%
{T_1.eps}
}%
&
{\includegraphics[
height=0.3in,
width=0.3in
]%
{T_0.eps}
}%
&
{\includegraphics[
height=0.3in,
width=0.3in
]%
{T_0.eps}%
}%
\\%
{\includegraphics[
height=0.3in,
width=0.3in
]%
{T_3.eps}%
}%
&
{\includegraphics[
height=0.3in,
width=0.3in
]%
{T_8.eps}%
}%
&
{\includegraphics[
height=0.3in,
width=0.3in
]%
{T_1.eps}%
}%
&
{\includegraphics[
height=0.3in,
width=0.3in
]%
{T_0.eps}%
}%
\\%
{\includegraphics[
height=0.3in,
width=0.3in
]%
{T_0.eps}%
}%
&
{\includegraphics[
height=0.3in,
width=0.3in
]%
{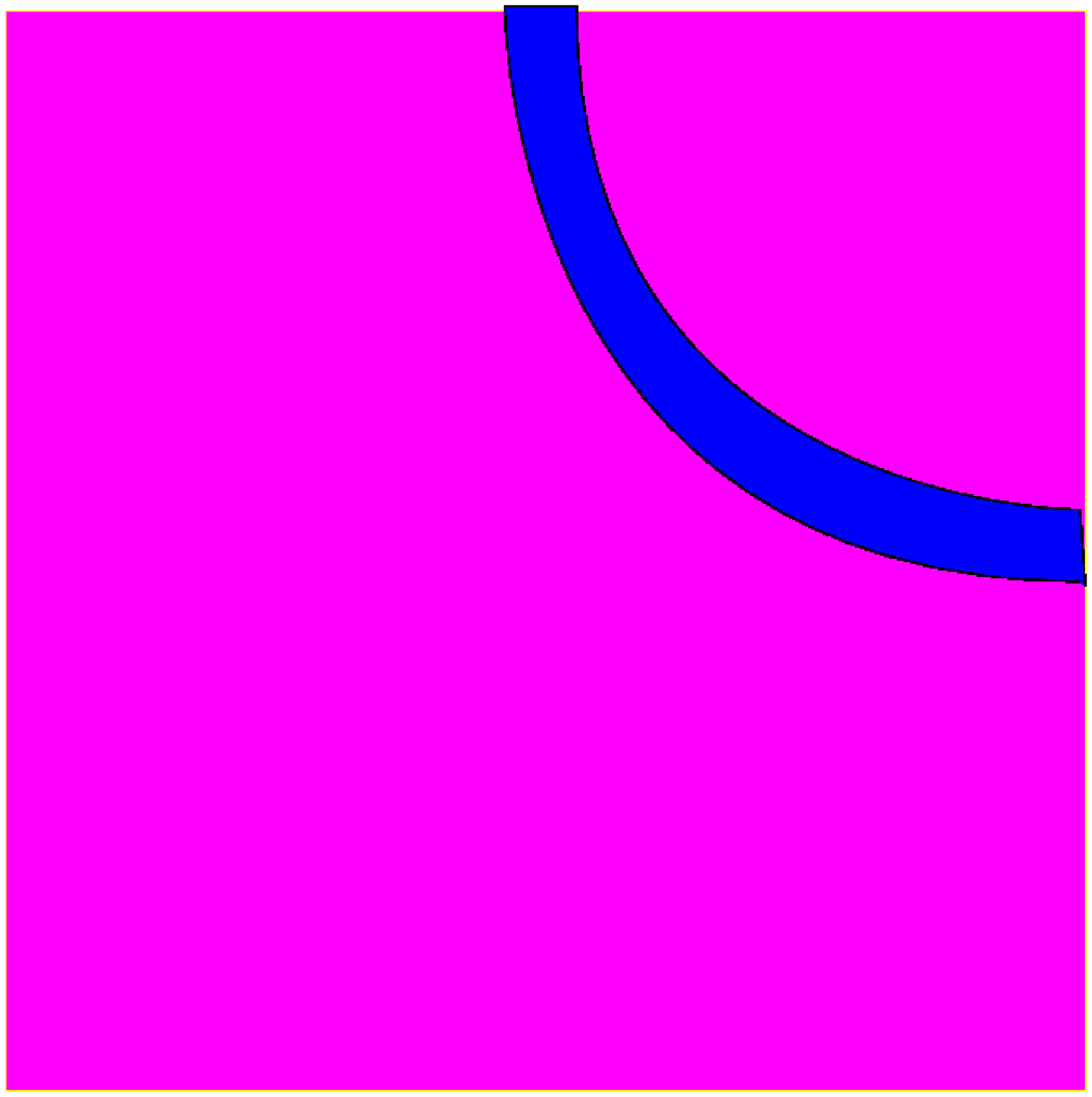}%
}%
&
{\includegraphics[
height=0.3in,
width=0.3in
]%
{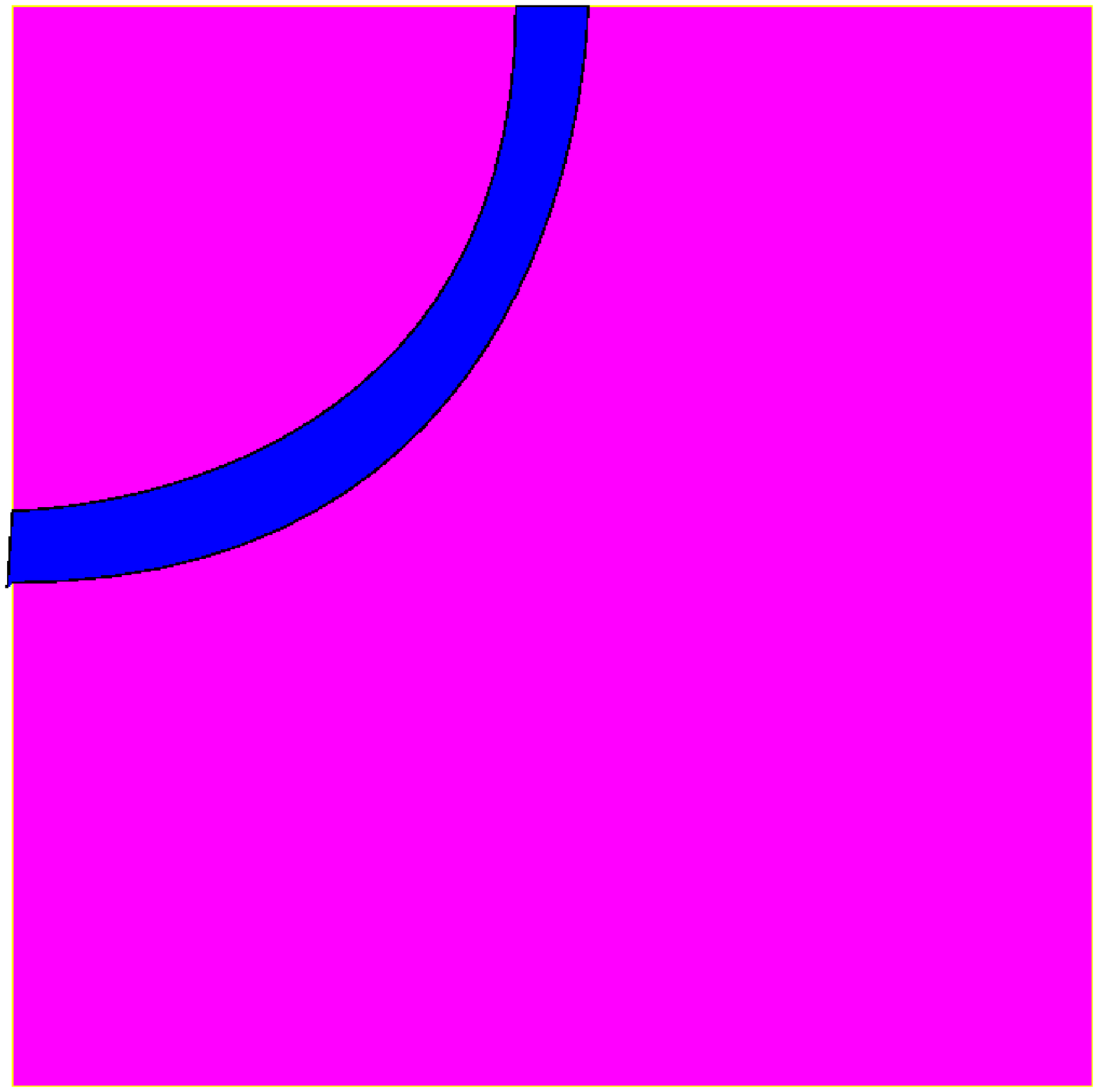}%
}%
&
{\includegraphics[
height=0.3in,
width=0.3in
]%
{T_0.eps}%
}%
\\%
{\includegraphics[
height=0.3in,
width=0.3in
]%
{T_0.eps}%
}%
&
{\includegraphics[
height=0.3in,
width=0.3in
]%
{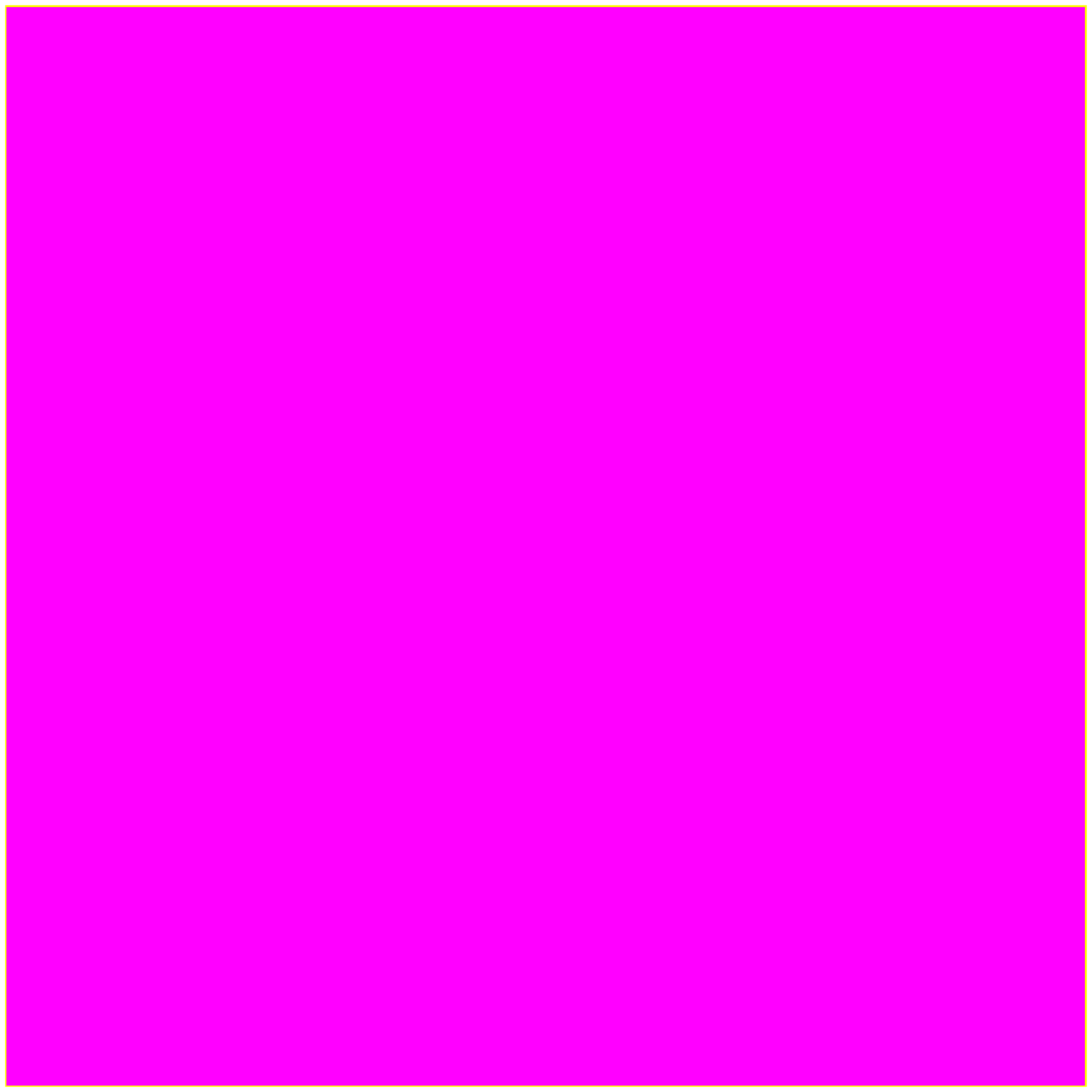}%
}%
&
{\includegraphics[
height=0.3in,
width=0.3in
]%
{B_0.eps}%
}%
&
{\includegraphics[
height=0.3in,
width=0.3in
]%
{T_0.eps}%
}%
\end{array}
\Leftrightarrow _{P_6}%
\begin{array}
[c]{cccc}%
{\includegraphics[
height=0.3in,
width=0.3in
]%
{T_2.eps}
}%
&
{\includegraphics[
height=0.3in,
width=0.3in
]%
{T_1.eps}
}%
&
{\includegraphics[
height=0.3in,
width=0.3in
]%
{T_0.eps}
}%
&
{\includegraphics[
height=0.3in,
width=0.3in
]%
{T_0.eps}%
}%
\\%
{\includegraphics[
height=0.3in,
width=0.3in
]%
{T_3.eps}%
}%
&
{\includegraphics[
height=0.3in,
width=0.3in
]%
{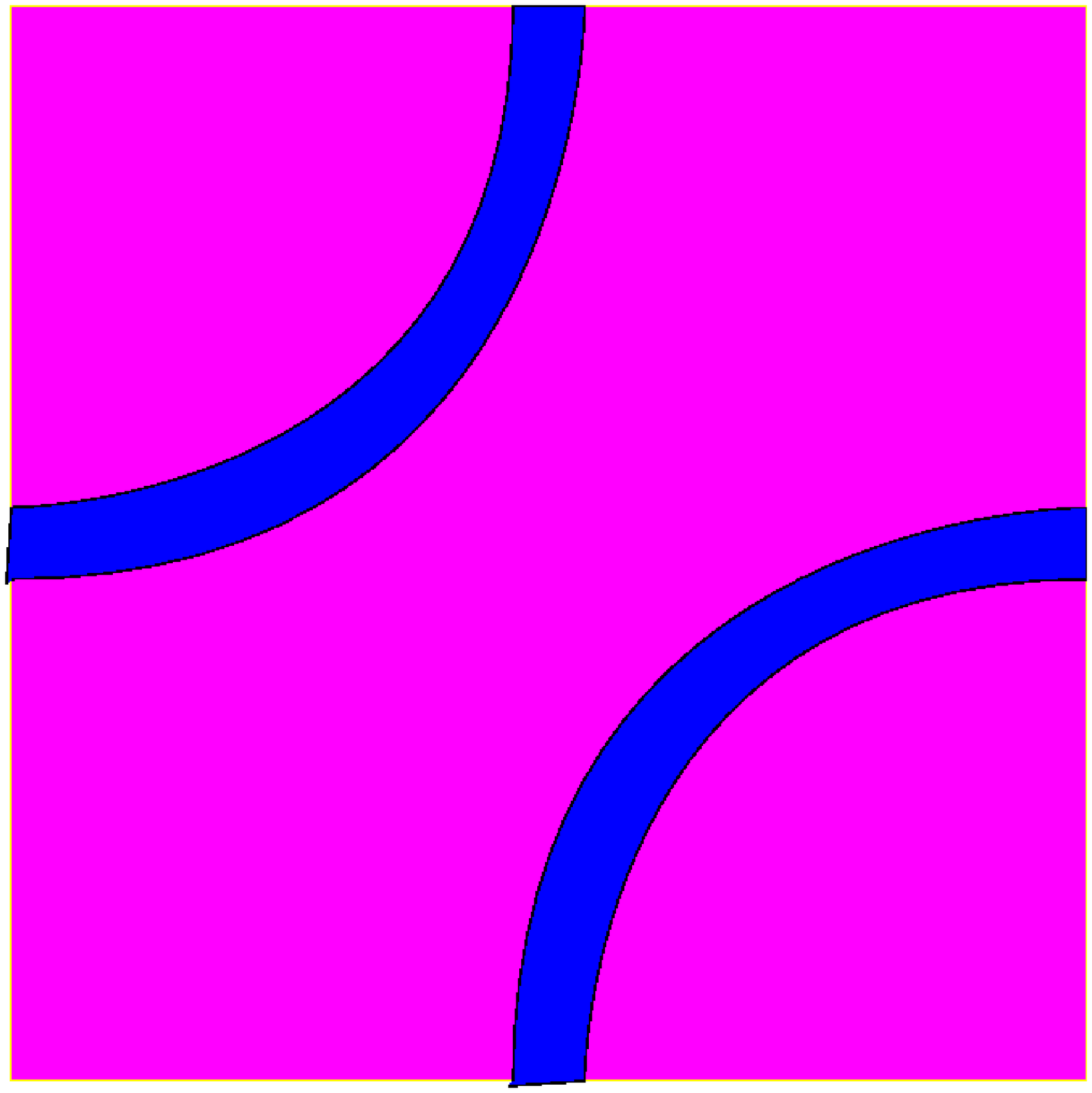}%
}%
&
{\includegraphics[
height=0.3in,
width=0.3in
]%
{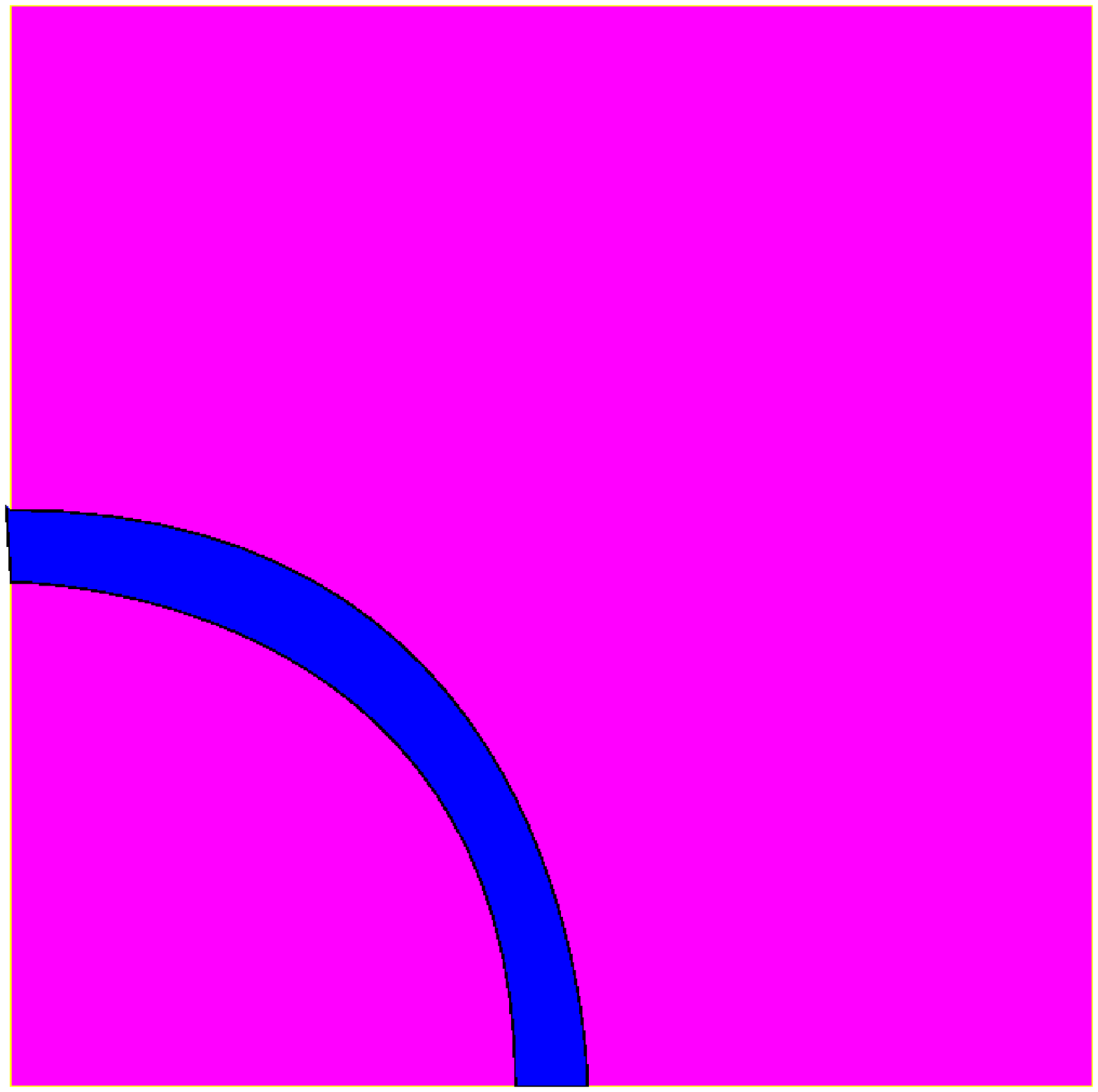}%
}%
&
{\includegraphics[
height=0.3in,
width=0.3in
]%
{T_0.eps}%
}%
\\%
{\includegraphics[
height=0.3in,
width=0.3in
]%
{T_0.eps}%
}%
&
{\includegraphics[
height=0.3in,
width=0.3in
]%
{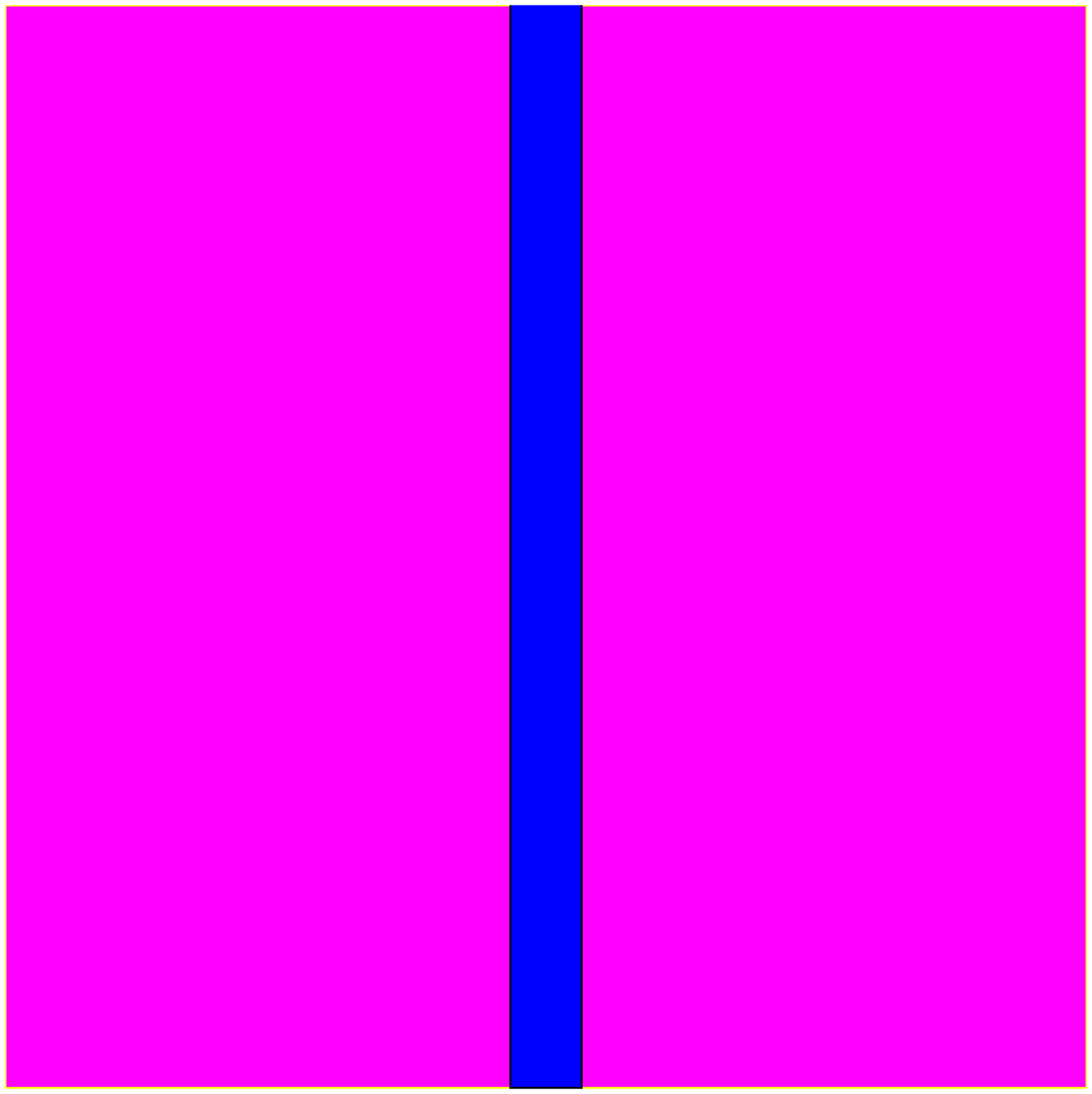}%
}%
&
{\includegraphics[
height=0.3in,
width=0.3in
]%
{B_6.eps}%
}%
&
{\includegraphics[
height=0.3in,
width=0.3in
]%
{T_0.eps}%
}%
\\%
{\includegraphics[
height=0.3in,
width=0.3in
]%
{T_0.eps}%
}%
&
{\includegraphics[
height=0.3in,
width=0.3in
]%
{T_3.eps}%
}%
&
{\includegraphics[
height=0.3in,
width=0.3in
]%
{T_4.eps}%
}%
&
{\includegraphics[
height=0.3in,
width=0.3in
]%
{T_0.eps}%
}%
\end{array}
\Leftrightarrow _{P_6}
\begin{array}
[c]{cccc}%
{\includegraphics[
height=0.3in,
width=0.3in
]%
{T_2.eps}
}%
&
{\includegraphics[
height=0.3in,
width=0.3in
]%
{T_1.eps}
}%
&
{\includegraphics[
height=0.3in,
width=0.3in
]%
{T_0.eps}
}%
&
{\includegraphics[
height=0.3in,
width=0.3in
]%
{T_0.eps}%
}%
\\%
{\includegraphics[
height=0.3in,
width=0.3in
]%
{T_3.eps}%
}%
&
{\includegraphics[
height=0.3in,
width=0.3in
]%
{T_4.eps}%
}%
&
{\includegraphics[
height=0.3in,
width=0.3in
]%
{T_0.eps}%
}%
&
{\includegraphics[
height=0.3in,
width=0.3in
]%
{T_0.eps}%
}%
\\%
{\includegraphics[
height=0.3in,
width=0.3in
]%
{B_0.eps}%
}%
&
{\includegraphics[
height=0.3in,
width=0.3in
]%
{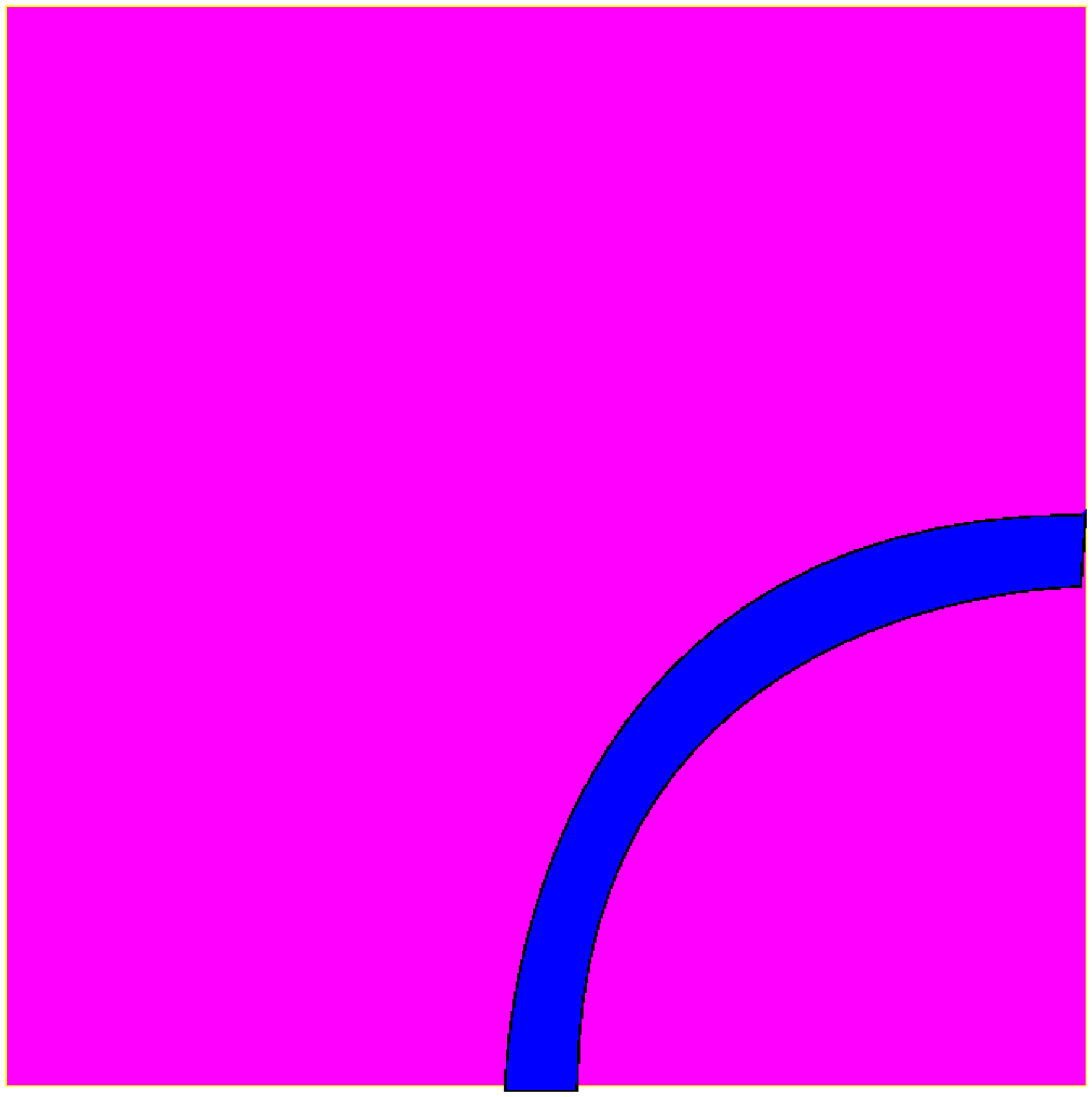}%
}%
&
{\includegraphics[
height=0.3in,
width=0.3in
]%
{T_1.eps}%
}%
&
{\includegraphics[
height=0.3in,
width=0.3in
]%
{T_0.eps}%
}%
\\%
{\includegraphics[
height=0.3in,
width=0.3in
]%
{B_0.eps}%
}%
&
{\includegraphics[
height=0.3in,
width=0.3in
]%
{B_3.eps}%
}%
&
{\includegraphics[
height=0.3in,
width=0.3in
]%
{T_4.eps}%
}%
&
{\includegraphics[
height=0.3in,
width=0.3in
]%
{T_0.eps}%
}%
\end{array}
\]

\[
\Leftrightarrow _{P_6}
\begin{array}
[c]{cccc}%
{\includegraphics[
height=0.3in,
width=0.3in
]%
{T_2.eps}
}%
&
{\includegraphics[
height=0.3in,
width=0.3in
]%
{T_1.eps}
}%
&
{\includegraphics[
height=0.3in,
width=0.3in
]%
{T_0.eps}
}%
&
{\includegraphics[
height=0.3in,
width=0.3in
]%
{T_0.eps}%
}%
\\%
{\includegraphics[
height=0.3in,
width=0.3in
]%
{T_3.eps}%
}%
&
{\includegraphics[
height=0.3in,
width=0.3in
]%
{T_4.eps}%
}%
&
{\includegraphics[
height=0.3in,
width=0.3in
]%
{T_0.eps}%
}%
&
{\includegraphics[
height=0.3in,
width=0.3in
]%
{T_0.eps}%
}%
\\%
{\includegraphics[
height=0.3in,
width=0.3in
]%
{T_2.eps}%
}%
&
{\includegraphics[
height=0.3in,
width=0.3in
]%
{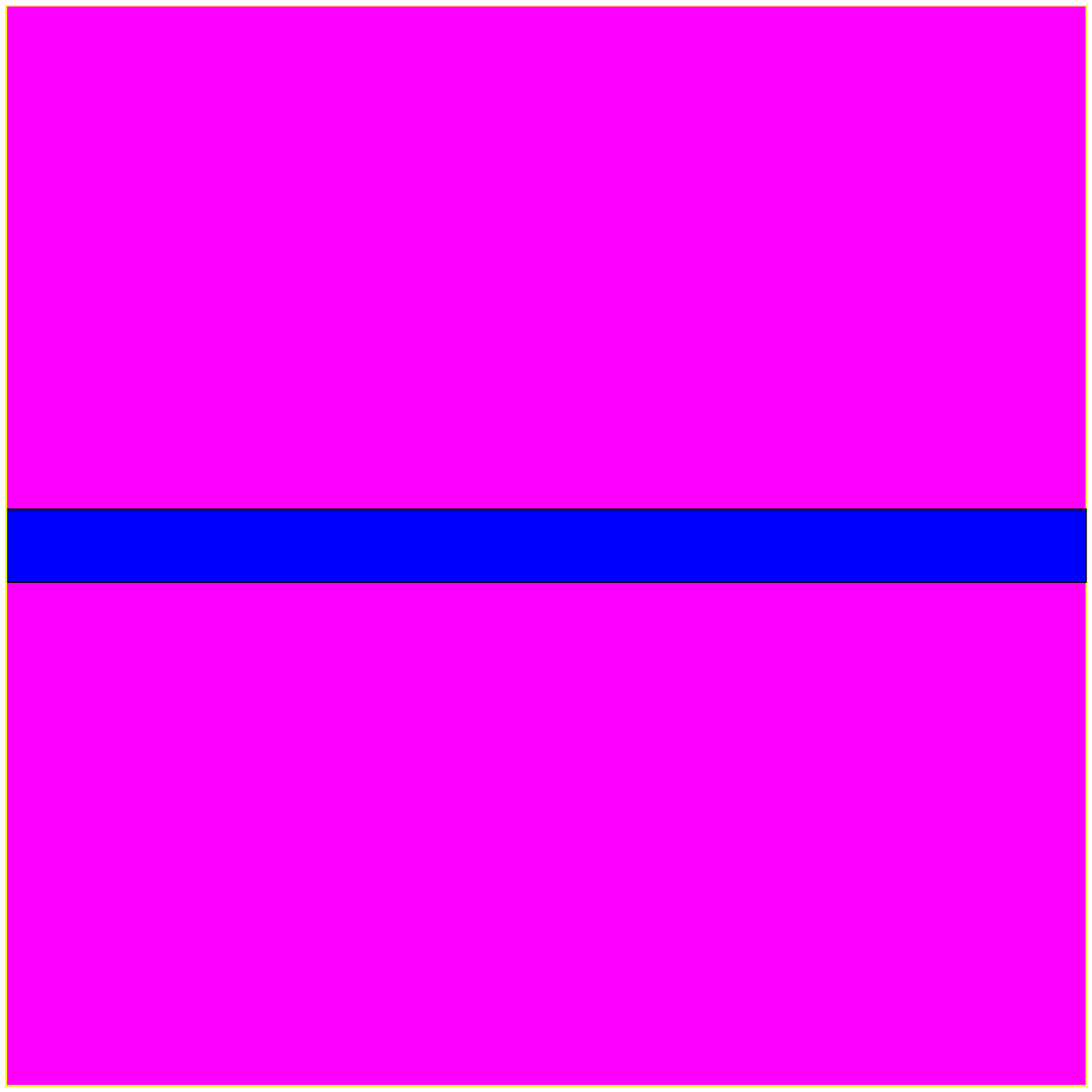}%
}%
&
{\includegraphics[
height=0.3in,
width=0.3in
]%
{B_1.eps}%
}%
&
{\includegraphics[
height=0.3in,
width=0.3in
]%
{T_0.eps}%
}%
\\%
{\includegraphics[
height=0.3in,
width=0.3in
]%
{T_3.eps}%
}%
&
{\includegraphics[
height=0.3in,
width=0.3in
]%
{B_5.eps}%
}%
&
{\includegraphics[
height=0.3in,
width=0.3in
]%
{B_4.eps}%
}%
&
{\includegraphics[
height=0.3in,
width=0.3in
]%
{T_0.eps}%
}%
\end{array}
\Leftrightarrow _{P_6}%
\begin{array}
[c]{cccc}%
{\includegraphics[
height=0.3in,
width=0.3in
]%
{T_2.eps}
}%
&
{\includegraphics[
height=0.3in,
width=0.3in
]%
{B_1.eps}
}%
&
{\includegraphics[
height=0.3in,
width=0.3in
]%
{B_0.eps}
}%
&
{\includegraphics[
height=0.3in,
width=0.3in
]%
{T_0.eps}%
}%
\\%
{\includegraphics[
height=0.3in,
width=0.3in
]%
{T_3.eps}%
}%
&
{\includegraphics[
height=0.3in,
width=0.3in
]%
{B_4.eps}%
}%
&
{\includegraphics[
height=0.3in,
width=0.3in
]%
{B_0.eps}%
}%
&
{\includegraphics[
height=0.3in,
width=0.3in
]%
{T_0.eps}%
}%
\\%
{\includegraphics[
height=0.3in,
width=0.3in
]%
{T_2.eps}%
}%
&
{\includegraphics[
height=0.3in,
width=0.3in
]%
{T_1.eps}%
}%
&
{\includegraphics[
height=0.3in,
width=0.3in
]%
{T_0.eps}%
}%
&
{\includegraphics[
height=0.3in,
width=0.3in
]%
{T_0.eps}%
}%
\\%
{\includegraphics[
height=0.3in,
width=0.3in
]%
{T_3.eps}%
}%
&
{\includegraphics[
height=0.3in,
width=0.3in
]%
{T_4.eps}%
}%
&
{\includegraphics[
height=0.3in,
width=0.3in
]%
{T_0.eps}%
}%
&
{\includegraphics[
height=0.3in,
width=0.3in
]%
{T_0.eps}%
}%
\end{array}
\Leftrightarrow _{P_6}%
\begin{array}
[c]{cccc}%
{\includegraphics[
height=0.3in,
width=0.3in
]%
{B_2.eps}
}%
&
{\includegraphics[
height=0.3in,
width=0.3in
]%
{B_5.eps}
}%
&
{\includegraphics[
height=0.3in,
width=0.3in
]%
{T_1.eps}
}%
&
{\includegraphics[
height=0.3in,
width=0.3in
]%
{T_0.eps}%
}%
\\%
{\includegraphics[
height=0.3in,
width=0.3in
]%
{B_3.eps}%
}%
&
{\includegraphics[
height=0.3in,
width=0.3in
]%
{B_5.eps}%
}%
&
{\includegraphics[
height=0.3in,
width=0.3in
]%
{T_4.eps}%
}%
&
{\includegraphics[
height=0.3in,
width=0.3in
]%
{T_0.eps}%
}%
\\%
{\includegraphics[
height=0.3in,
width=0.3in
]%
{T_2.eps}%
}%
&
{\includegraphics[
height=0.3in,
width=0.3in
]%
{T_1.eps}%
}%
&
{\includegraphics[
height=0.3in,
width=0.3in
]%
{T_0.eps}%
}%
&
{\includegraphics[
height=0.3in,
width=0.3in
]%
{T_0.eps}%
}%
\\%
{\includegraphics[
height=0.3in,
width=0.3in
]%
{T_3.eps}%
}%
&
{\includegraphics[
height=0.3in,
width=0.3in
]%
{T_4.eps}%
}%
&
{\includegraphics[
height=0.3in,
width=0.3in
]%
{T_0.eps}%
}%
&
{\includegraphics[
height=0.3in,
width=0.3in
]%
{T_0.eps}%
}%
\end{array}
\]

\[
\Leftrightarrow _{P_6}
\begin{array}
[c]{cccc}%
{\includegraphics[
height=0.3in,
width=0.3in
]%
{T_0.eps}
}%
&
{\includegraphics[
height=0.3in,
width=0.3in
]%
{T_2.eps}
}%
&
{\includegraphics[
height=0.3in,
width=0.3in
]%
{T_1.eps}
}%
&
{\includegraphics[
height=0.3in,
width=0.3in
]%
{T_0.eps}%
}%
\\%
{\includegraphics[
height=0.3in,
width=0.3in
]%
{B_0.eps}%
}%
&
{\includegraphics[
height=0.3in,
width=0.3in
]%
{B_3.eps}%
}%
&
{\includegraphics[
height=0.3in,
width=0.3in
]%
{T_4.eps}%
}%
&
{\includegraphics[
height=0.3in,
width=0.3in
]%
{T_0.eps}%
}%
\\%
{\includegraphics[
height=0.3in,
width=0.3in
]%
{B_2.eps}%
}%
&
{\includegraphics[
height=0.3in,
width=0.3in
]%
{B_1.eps}%
}%
&
{\includegraphics[
height=0.3in,
width=0.3in
]%
{T_0.eps}%
}%
&
{\includegraphics[
height=0.3in,
width=0.3in
]%
{T_0.eps}%
}%
\\%
{\includegraphics[
height=0.3in,
width=0.3in
]%
{T_3.eps}%
}%
&
{\includegraphics[
height=0.3in,
width=0.3in
]%
{T_4.eps}%
}%
&
{\includegraphics[
height=0.3in,
width=0.3in
]%
{T_0.eps}%
}%
&
{\includegraphics[
height=0.3in,
width=0.3in
]%
{T_0.eps}%
}%
\end{array}
\Leftrightarrow _{P_6}%
\begin{array}
[c]{cccc}%
{\includegraphics[
height=0.3in,
width=0.3in
]%
{T_0.eps}
}%
&
{\includegraphics[
height=0.3in,
width=0.3in
]%
{T_2.eps}
}%
&
{\includegraphics[
height=0.3in,
width=0.3in
]%
{T_1.eps}
}%
&
{\includegraphics[
height=0.3in,
width=0.3in
]%
{T_0.eps}%
}%
\\%
{\includegraphics[
height=0.3in,
width=0.3in
]%
{T_2.eps}%
}%
&
{\includegraphics[
height=0.3in,
width=0.3in
]%
{T_7.eps}%
}%
&
{\includegraphics[
height=0.3in,
width=0.3in
]%
{T_4.eps}%
}%
&
{\includegraphics[
height=0.3in,
width=0.3in
]%
{T_0.eps}%
}%
\\%
{\includegraphics[
height=0.3in,
width=0.3in
]%
{B_6.eps}%
}%
&
{\includegraphics[
height=0.3in,
width=0.3in
]%
{B_6.eps}%
}%
&
{\includegraphics[
height=0.3in,
width=0.3in
]%
{T_0.eps}%
}%
&
{\includegraphics[
height=0.3in,
width=0.3in
]%
{T_0.eps}%
}%
\\%
{\includegraphics[
height=0.3in,
width=0.3in
]%
{B_3.eps}%
}%
&
{\includegraphics[
height=0.3in,
width=0.3in
]%
{B_4.eps}%
}%
&
{\includegraphics[
height=0.3in,
width=0.3in
]%
{T_0.eps}%
}%
&
{\includegraphics[
height=0.3in,
width=0.3in
]%
{T_0.eps}%
}%
\end{array}
\Leftrightarrow _{P_6}%
\begin{array}
[c]{cccc}%
{\includegraphics[
height=0.3in,
width=0.3in
]%
{T_0.eps}
}%
&
{\includegraphics[
height=0.3in,
width=0.3in
]%
{T_2.eps}
}%
&
{\includegraphics[
height=0.3in,
width=0.3in
]%
{T_1.eps}
}%
&
{\includegraphics[
height=0.3in,
width=0.3in
]%
{T_0.eps}%
}%
\\%
{\includegraphics[
height=0.3in,
width=0.3in
]%
{T_2.eps}%
}%
&
{\includegraphics[
height=0.3in,
width=0.3in
]%
{T_7.eps}%
}%
&
{\includegraphics[
height=0.3in,
width=0.3in
]%
{T_4.eps}%
}%
&
{\includegraphics[
height=0.3in,
width=0.3in
]%
{T_0.eps}%
}%
\\%
{\includegraphics[
height=0.3in,
width=0.3in
]%
{T_3.eps}%
}%
&
{\includegraphics[
height=0.3in,
width=0.3in
]%
{T_4.eps}%
}%
&
{\includegraphics[
height=0.3in,
width=0.3in
]%
{T_0.eps}%
}%
&
{\includegraphics[
height=0.3in,
width=0.3in
]%
{T_0.eps}%
}%
\\%
{\includegraphics[
height=0.3in,
width=0.3in
]%
{T_0.eps}%
}%
&
{\includegraphics[
height=0.3in,
width=0.3in
]%
{T_0.eps}%
}%
&
{\includegraphics[
height=0.3in,
width=0.3in
]%
{T_0.eps}%
}%
&
{\includegraphics[
height=0.3in,
width=0.3in
]%
{T_0.eps}%
}%
\end{array}
=\iota K_2
\]

\end{exa}

\section{grid diagram}{\label {grid_diagram}}

Let $K$ be an oriented knot in $S^3$. Choose a knot diagram $D$ for $K$ such that

\begin{itemize}
\item $D$ is composed entirely of horizontal and vertical segments,

\item no two horizontal segments of $D$ have the same $y$-coordinate, and no two vertical
segments of D have the same $x$-coordinate, and

\item at each crossing, the vertical segment crosses over the horizontal segment.

\end{itemize}

Every knot admits such a diagram (Starting with
a knot diagram $D$, one approximates $D$ using horizontal and vertical segments,
so that crossings are always vertical over horizontal). The only data in such a diagram are the endpoints of the segments, which we record by placing $X$'s and $O$'s at these endpoints,
alternately around the knot, and so that the knot is oriented from $X$ to $O$ along vertical segments. The data of the knot is entirely encoded in these X's and O's, which we can see as sitting in the
middle of squares on a piece of graph paper. Notice that no two $X$'s (respectively $O$'s) lie on the same horizontal or vertical line (Perturb the result so that no segments lie on the same horizontal or vertical line).

\begin{figure}[ht]
\begin{minipage}{2.5cm}
\includegraphics[width=\hsize]{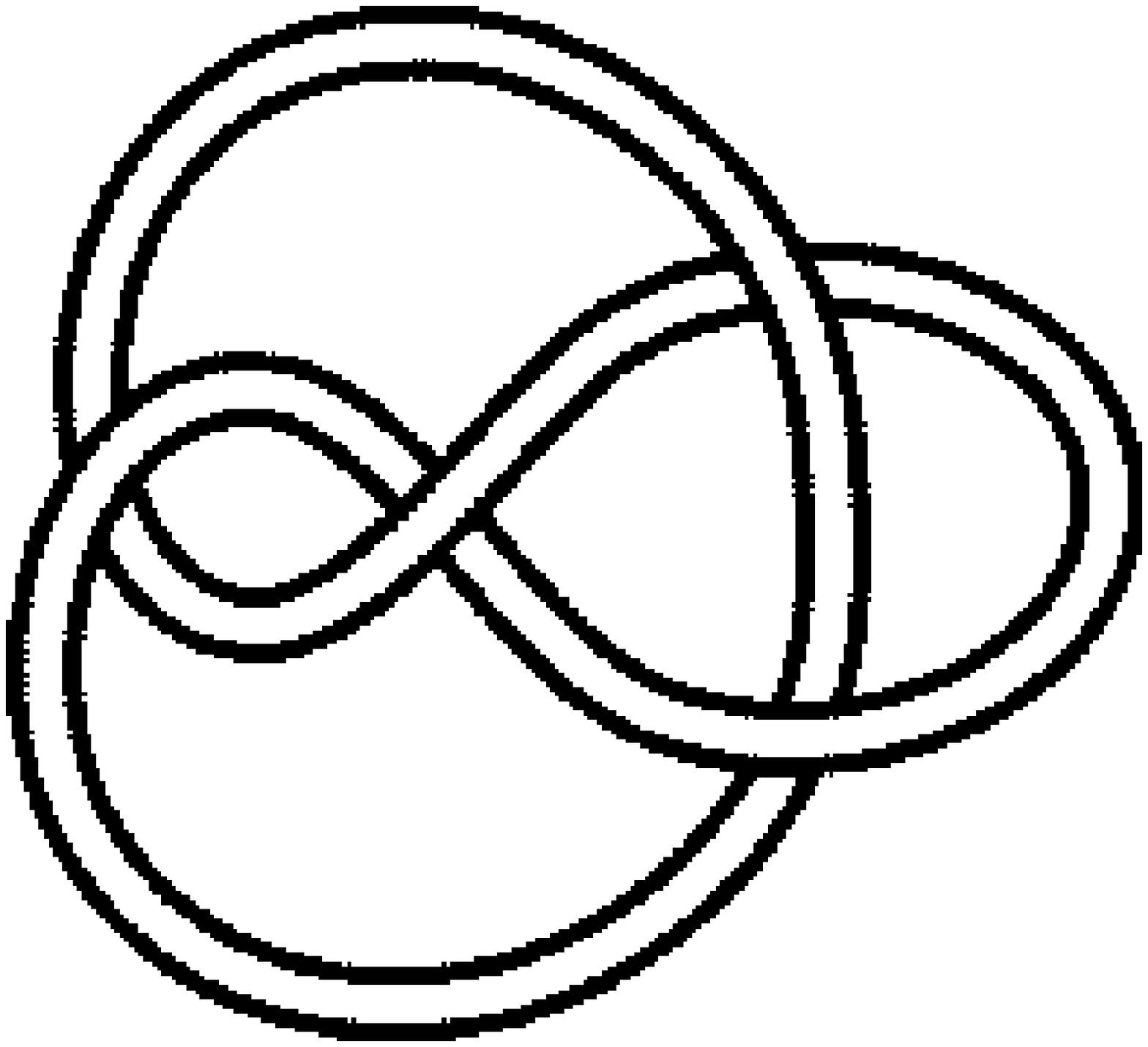}
\end{minipage}
\ $\Longrightarrow $ \ 
\begin{minipage}{2.5cm}
\includegraphics[width=\hsize]{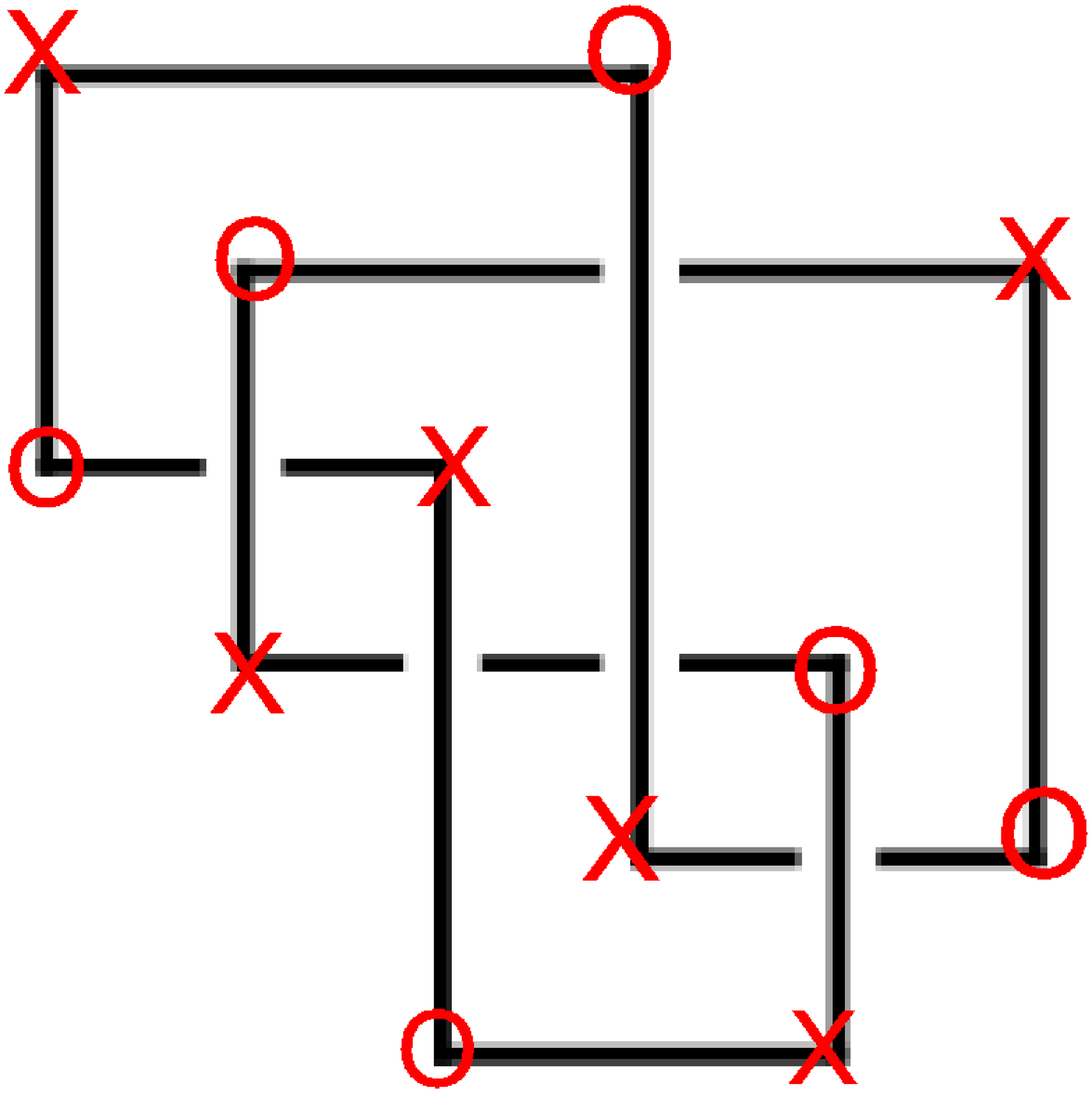}
\end{minipage}
\ $\Longrightarrow $ \ 
\begin{minipage}{2.5cm}
\includegraphics[width=\hsize]{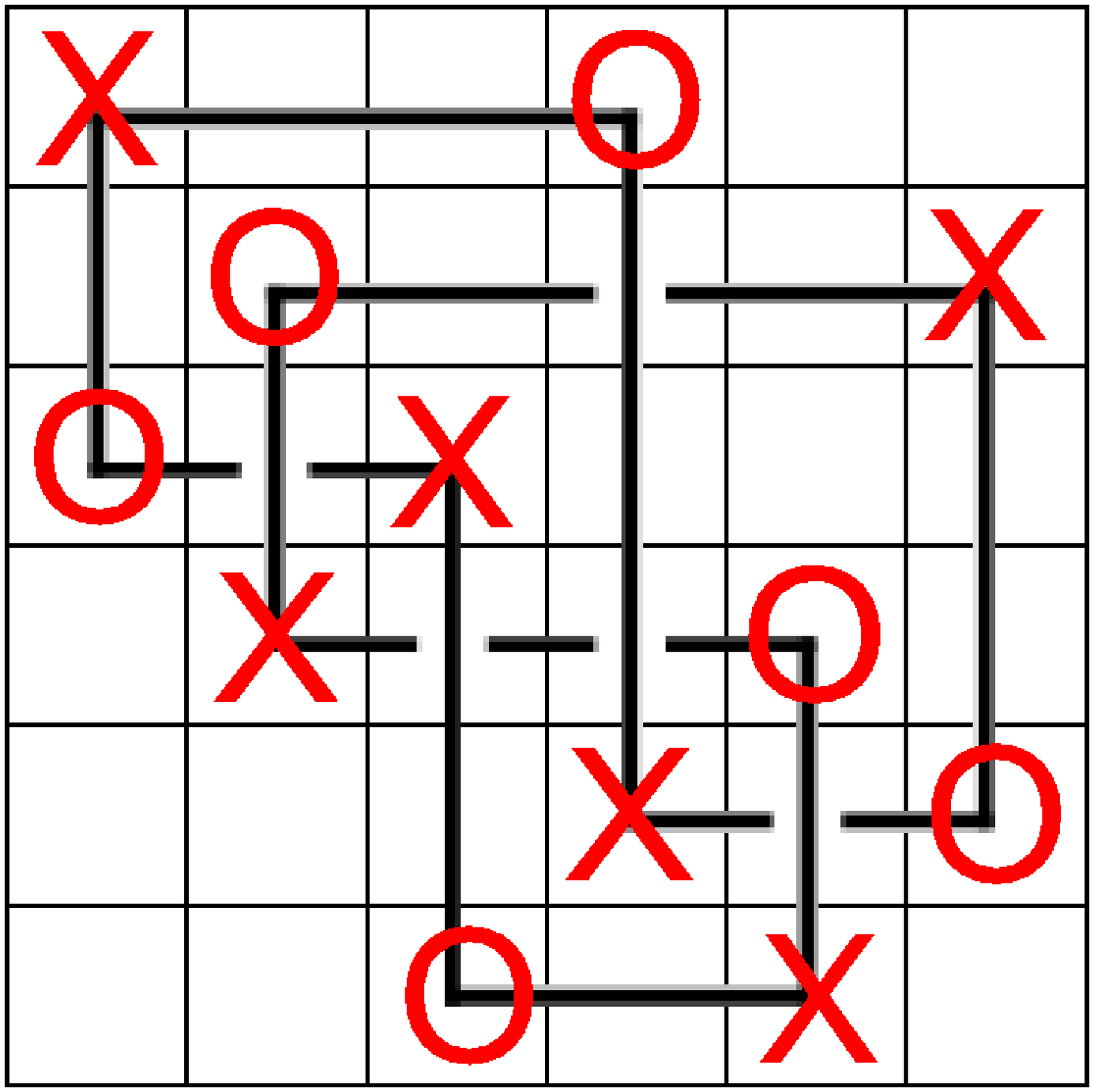}
\end{minipage}
\ $\Longrightarrow $ \ 
\begin{minipage}{2.5cm}
\includegraphics[width=\hsize]{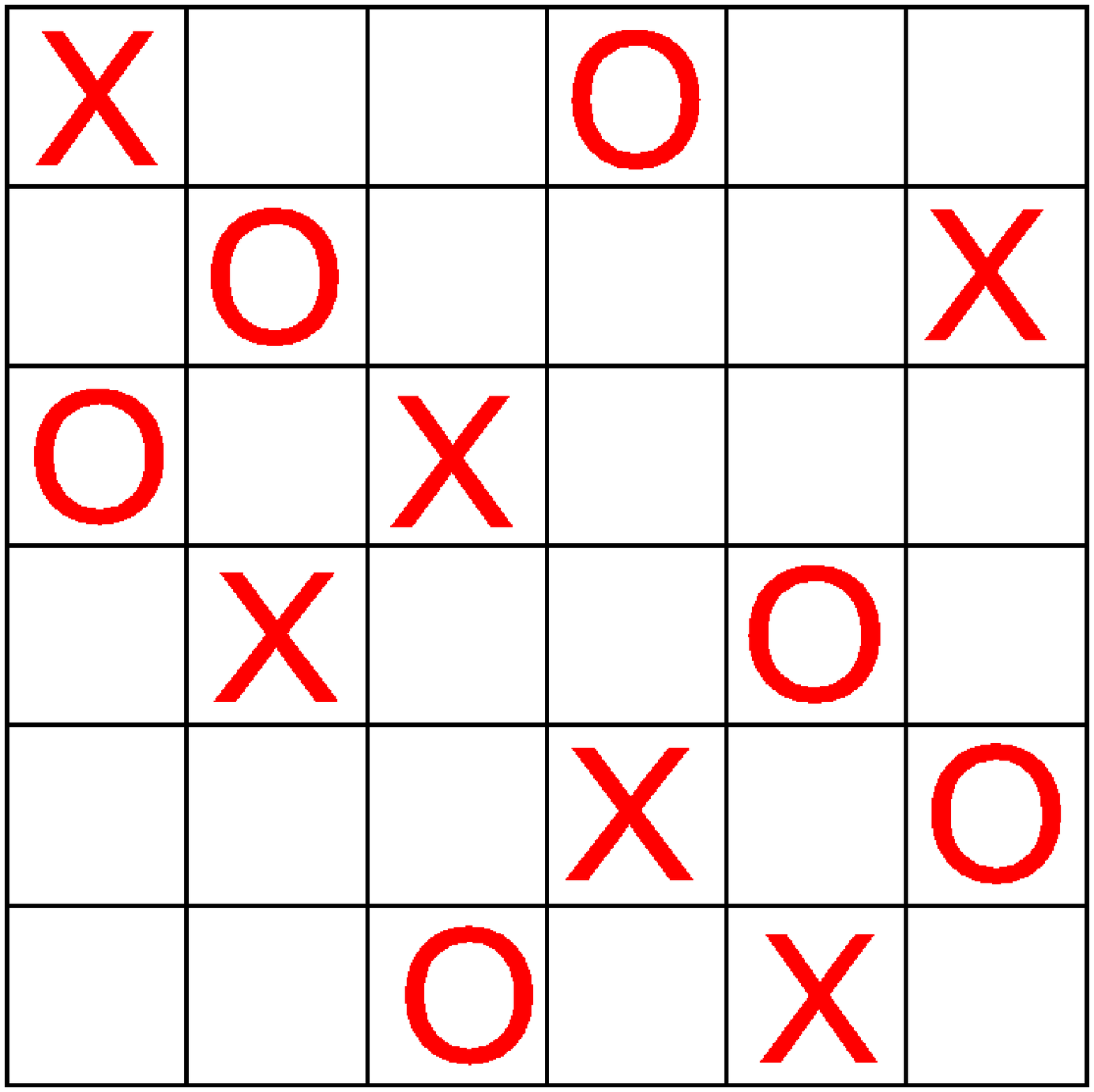}
\end{minipage}
\end{figure}

\begin{defn}
Let $\mathbb{X} = \{ X_i \}^N _{i=1}$ and $O = \{O_i \}^N
 _{i=1}$ denote the set of $X$'s and $O$'s, respectively. Up to
isotopy of the knot (and renumbering of the $X_i$), we may assume that the coordinates
of $X_i$ are $(i - \frac{1}{2}, \sigma_{\mathbb{X}}(i)- \frac{1}{2})$
for some permutation $\sigma_{\mathbb{X}} \in S_N$. Then (after renumbering),
the coordinates of $O_i$ are $(i - \frac{1}{2}, \sigma_{\mathbb{O}}(i)- \frac{1}{2})$ for some permutation $\sigma_{\mathbb{O}} \in S_N$. The data
$(\mathbb{R}^2,\mathbb{X},\mathbb{O})$ is a planar grid diagram for the knot $K$.
\end{defn}

Of course, different grid diagrams can lead to the same link. However, this is controlled by the
following theorem:

\begin{thm}{\label{Dy}}{\cite{Dy}}
Any two grid diagrams which describe the
same link can be connected by a finite sequence of the following elementary moves:
\begin{itemize}
\item {\bf Cyclic permutation} cyclic permutation of the columns (resp. rows);

\item {\bf Commutation} commutation of two adjacent columns (resp. rows) under the condition that all the decorations of one of the two commuting columns (resp. rows) are strictly above (resp. strictly on the right of ) the decorations of the other one, where a decoration is an element of $\mathbb{O} \cup \mathbb{X}$.;

\item {\bf Stabilization/Destabilization} addition (resp. removal) of one column and one row by replacing
(resp. substituting) locally a decorated square by (resp. to) a $(2 \times 2)$-grid containing three
decorations in such a way that it remains globally a grid diagram.

\end{itemize}

\end{thm}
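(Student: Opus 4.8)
The plan is to prove the two implications separately: that the three elementary moves preserve the link type, and conversely that any two grid diagrams of the same link are joined by a finite sequence of them. For the easy (``if'') direction I would first fix the rule that converts a grid diagram into a piecewise-linear knot diagram: in each column join its $X$ to its $O$ by a vertical segment, in each row join its $O$ to its $X$ by a horizontal segment, and resolve each crossing so that the vertical strand passes over the horizontal one, exactly as in the construction of Section \ref{grid_diagram}. Under this rule each move is visibly a link isotopy. A \textbf{commutation} of two adjacent columns is a planar isotopy that slides one column past the other; the hypothesis that all decorations of one column lie strictly above those of the other guarantees that the vertical segments involved have disjoint ranges, so no strand is obstructed during the slide (and symmetrically for rows). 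A \textbf{cyclic permutation} is the translation one obtains by regarding the grid as drawn on a torus, which is manifestly an ambient isotopy. A \textbf{stabilization} merely replaces one corner of the diagram by a small staircase step, again a planar isotopy. Hence each move descends to the same link.

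The ``only if'' direction is the substantive part, and I would reduce it to Reidemeister's theorem. If grid diagrams $G$ and $G'$ present the same oriented link, then their associated diagrams $D$ and $D'$ are connected by a finite sequence of planar isotopies and Reidemeister moves. It then suffices to show two things: (i) a planar isotopy carrying one rectangular diagram to another factors into grid moves, and (ii) each Reidemeister move has a rectangular model realizable by grid moves. For (i) I would place the isotopy in general position, so that it passes through only finitely many elementary events in which a single horizontal edge crosses the height of another, or a single vertical edge crosses the $x$-coordinate of another; each such event is precisely a commutation, while any global repositioning needed to align the two grids is a composition of cyclic permutations. For (ii) I would exhibit explicit minimal grids realizing each Reidemeister move: Reidemeister I is a single stabilization or destabilization (up to commutations that bring the strand into position), and Reidemeister II and III are each obtained from a short explicit sequence of commutations interspersed with one or two (de)stabilizations.

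The main obstacle I anticipate lies in step (ii) for the Reidemeister III move, together with the general-position argument underlying (i). A Reidemeister move performed on a generic diagram does not respect the rectangular structure, so the real content is the transversality statement that the continuous isotopy between $D$ and $D'$ can be subdivided into finitely many local modifications, each already one of the three grid moves, and the bookkeeping verification that after performing the local grid manipulations modelling a Reidemeister move one returns to a bona fide grid diagram --- in particular that no two decorations end up sharing a row or column. Once the local realizations of Reidemeister I, II and III are tabulated and the finiteness of the decomposition in (i) is established, concatenating the resulting move sequences produces the required finite chain of elementary moves from $G$ to $G'$, completing the proof.
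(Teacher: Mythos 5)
You could not have known this, but the paper contains no proof of this statement to compare against: it is quoted verbatim, with attribution, as Dynnikov's theorem \cite{Dy} (the result goes back to Cromwell; the form used here is Dynnikov's), and it is used as a black box in the proof of the main theorem. So the only question is whether your blind attempt would stand on its own as a proof of the cited result.

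As written, it would not, although its architecture is the standard one --- it is essentially the strategy of Cromwell's and Dynnikov's proofs. The ``if'' direction is fine as a sketch (modulo the small point that a stabilization can introduce a kink, hence is a Reidemeister I move rather than a planar isotopy). The problem is the ``only if'' direction: the two items you defer, namely (i) factoring a planar isotopy between rectangular diagrams into grid moves and (ii) exhibiting grid realizations of the Reidemeister moves, RIII especially, are not bookkeeping; they are the entire content of the theorem, and (i) as you formulate it does not work. An ambient isotopy carrying one rectangular diagram to another does not pass through rectangular diagrams, so there are no well-defined ``elementary events'' in which one edge crosses the height of another: general position for the isotopy produces generic tangencies and triple points of non-rectangular diagrams, not commutations. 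The standard repair is combinatorial rather than transversal. One first proves a rectangularization lemma: every diagram with vertical-over-horizontal crossings admits a grid approximation, and any two grid approximations of planar-isotopic diagrams are connected by cyclic permutations, commutations and (de)stabilizations; this is done by induction on the combinatorics of the diagram, not by deforming an ambient isotopy. Only then does one treat the three Reidemeister moves locally, and in each case one must check that the local manipulation is legal globally --- in particular that the ``strictly above / strictly to the right'' hypothesis of the commutation move holds at each step, which is exactly the verification your plan postpones. Without the rectangularization lemma and the explicit RII/RIII sequences, your proposal is a correct plan rather than a proof; with them, it becomes precisely the argument of \cite{Dy}.
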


\section{grid diagram as a knot mosaic diagram}{\label {grid_mosaic_diagram}}
We can regard a grid diagram as a knot mosaic diagram using following map:

\begin{figure}[ht]
\begin{minipage}{1.2cm}
\includegraphics[width=\hsize]{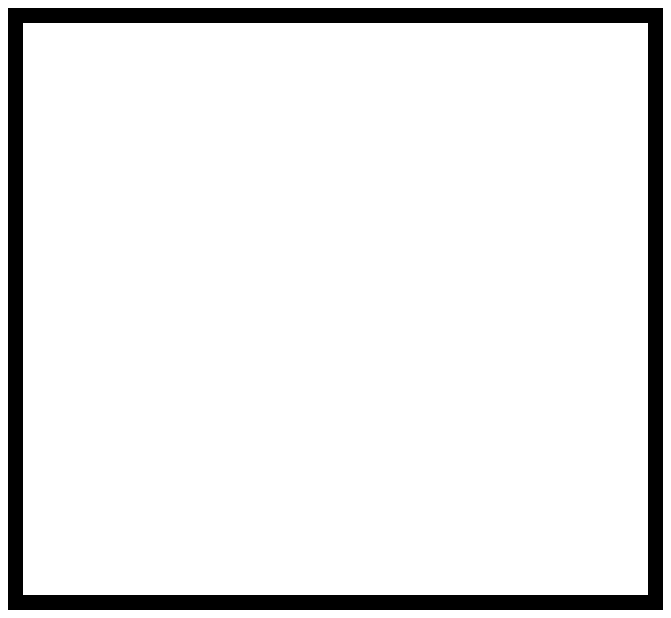}
\end{minipage}
\ $\longmapsto   $ \ 
\begin{minipage}{1cm}
\includegraphics[width=\hsize]{T_0.eps}
\end{minipage}

\begin{minipage}{1.2cm}
\includegraphics[width=\hsize]{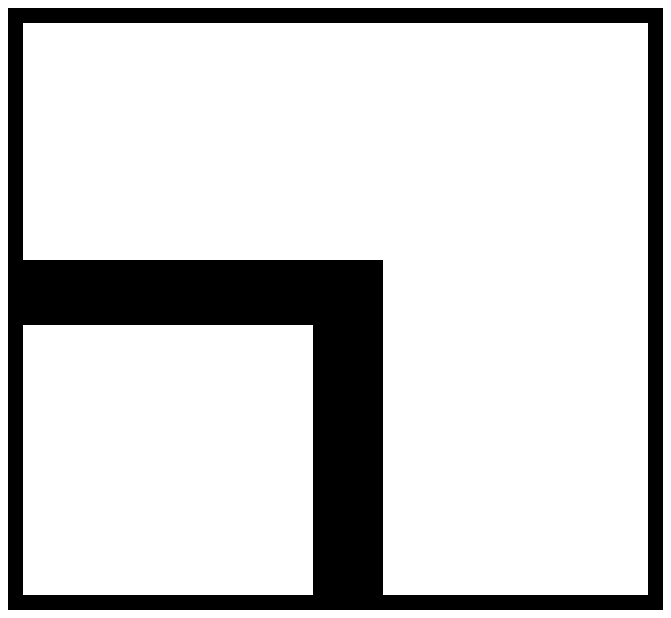}
\end{minipage}
\ $\longmapsto   $ \ 
\begin{minipage}{1cm}
\includegraphics[width=\hsize]{T_1.eps}
\end{minipage}

\begin{minipage}{1.2cm}
\includegraphics[width=\hsize]{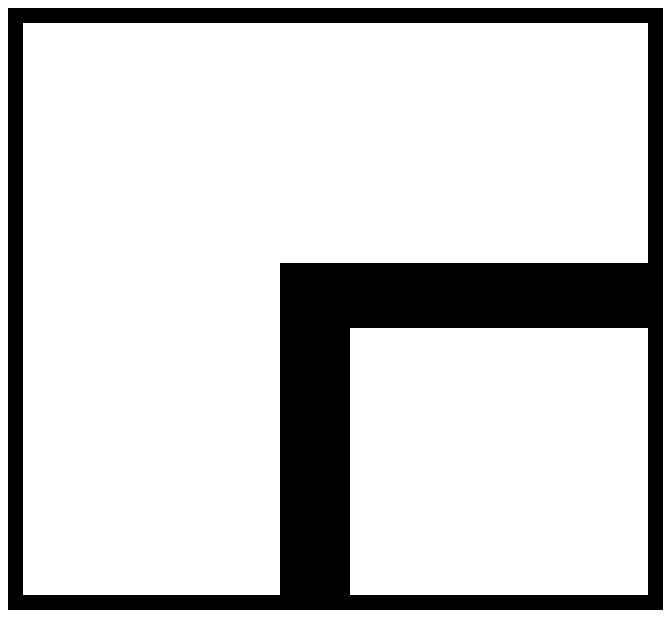}
\end{minipage}
\ $\longmapsto   $ \ 
\begin{minipage}{1cm}
\includegraphics[width=\hsize]{T_2.eps}
\end{minipage}

\begin{minipage}{1.2cm}
\includegraphics[width=\hsize]{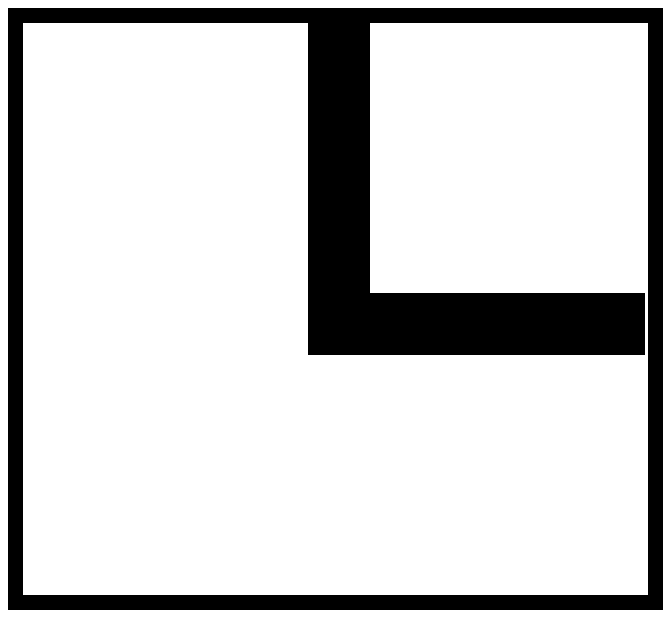}
\end{minipage}
\ $\longmapsto   $ \ 
\begin{minipage}{1cm}
\includegraphics[width=\hsize]{T_3.eps}
\end{minipage}

\begin{minipage}{1.2cm}
\includegraphics[width=\hsize]{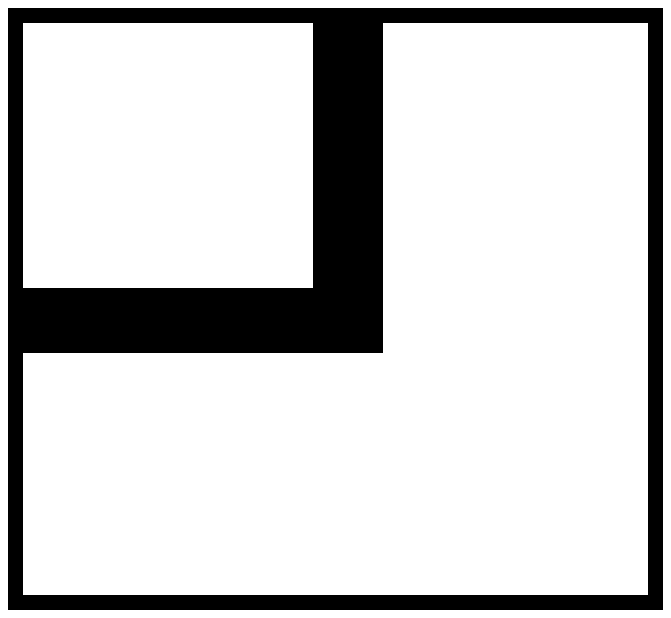}
\end{minipage}
\ $\longmapsto   $ \ 
\begin{minipage}{1cm}
\includegraphics[width=\hsize]{T_4.eps}
\end{minipage}

\begin{minipage}{1.2cm}
\includegraphics[width=\hsize]{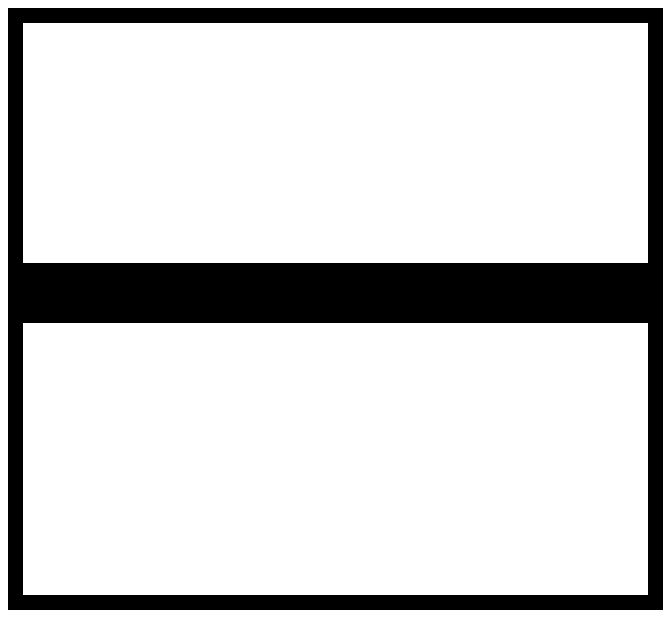}
\end{minipage}
\ $\longmapsto   $ \ 
\begin{minipage}{1cm}
\includegraphics[width=\hsize]{T_5.eps}
\end{minipage}

\begin{minipage}{1.2cm}
\includegraphics[width=\hsize]{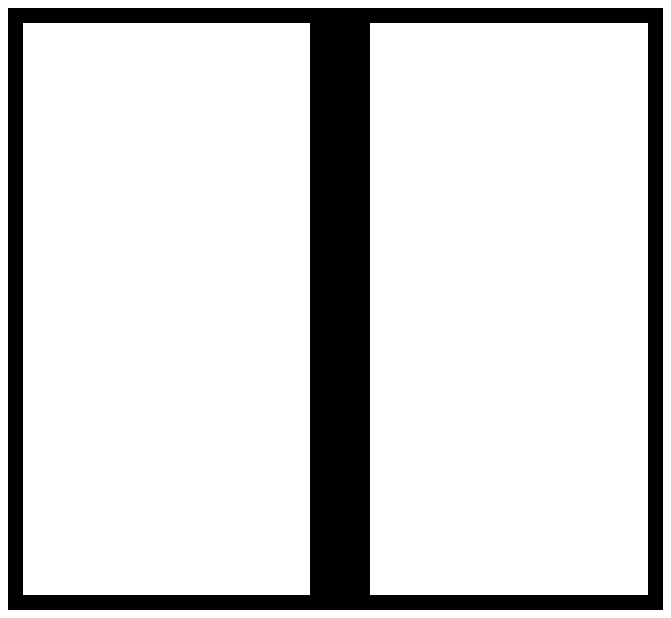}
\end{minipage}
\ $\longmapsto   $ \ 
\begin{minipage}{1cm}
\includegraphics[width=\hsize]{T_6.eps}
\end{minipage}

\begin{minipage}{1.2cm}
\includegraphics[width=\hsize]{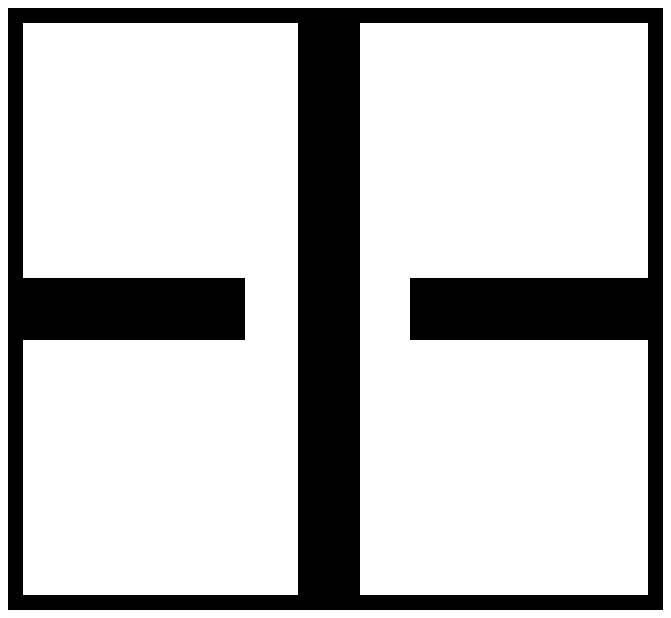}
\end{minipage}
\ $\longmapsto   $ \ 
\begin{minipage}{1cm}
\includegraphics[width=\hsize]{T_10.eps}
\end{minipage}
\end{figure}

Using this map, we identify the set of $(n \times n)$-grid diagrams with a subset of $\mathbb{K}^{(n)}$ generated by $\{T_0, T_1, T_2, T_3, T_4, T_5, T_6, T_{10} \}$ denoted by $\mathbb{G}^{(n)}$.

\section{elementary moves as mosaic moves}{\label {grid_mosaic_move}}

In this section, we will check elementary moves can be represented by mosaic moves under identification of previous section.
\begin{itemize}
\item {\bf Cyclic permutation}\\ 
In \cite{OST}, P. S. Ozsv$\acute{a}$th, Z. Sz$\acute{a}$bo, and D. P. Thurston proved the following fact, so it is sufficiently to check Commutation and Stabilization/Destabilization.

\begin{lem}{\label{OST}}
A cyclic permutation is equivalent to a sequence of commutations and (de)stabilizations.
\end{lem}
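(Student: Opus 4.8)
The plan is to reduce the statement to a single elementary cyclic shift and then realize that shift explicitly. An arbitrary cyclic permutation of the columns is a composition of single-step shifts, each of which moves the rightmost column into the leftmost position; and the case of rows is entirely symmetric, obtained by reflecting the grid across a diagonal (transposition), which interchanges column-commutations with row-commutations and preserves the class of admissible stabilizations. Hence it suffices to treat one single-step shift of columns. Viewing the planar grid diagram as a fundamental domain for a diagram on the torus, this single shift is precisely a commutation of two columns "across the seam" along which the planar diagram has been cut; the content of the lemma is that this seam-crossing operation, which is not itself one of the planar moves, nevertheless lies in the subgroup they generate.

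To realize the single shift, I would use the following three-part construction. First, perform a stabilization in the column to be moved, replacing one of its decorated squares by a $2 \times 2$ block carrying three decorations; this introduces an auxiliary row and column and isolates a short \emph{finger}, namely a new column whose $X$ and $O$ can be arranged to sit in two consecutive rows. Second, transport this finger across the grid by a sequence of commutations: because its two decorations are confined to adjacent rows, at each step one can arrange the strict non-interleaving hypothesis of the commutation move to hold against the neighboring column, so the finger may be slid column-by-column all the way to the opposite side. Third, perform a destabilization at the new location, which removes the auxiliary row and column and leaves exactly the cyclically shifted diagram. Composing these moves exhibits the cyclic shift as an element of the group generated by commutation and (de)stabilization.

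The main obstacle, and the part requiring genuine care rather than routine manipulation, is the second step: one must verify that at every intermediate position the strict non-interleaving condition governing a commutation actually holds, and that the configuration remains a legitimate grid diagram (exactly one $X$ and one $O$ in every row and every column) throughout. This is a finite but delicate case analysis, branching on the relative vertical positions of the decorations of the moving finger and of each column it passes. A secondary point is to confirm that the stabilization and destabilization employed are of the admissible local type described in Theorem \ref{Dy}, so that the three-decoration $2 \times 2$ replacement keeps the global object a grid diagram; once the reduction of the first paragraph is fixed, this is routine. Since this is exactly the statement established in \cite{OST}, that reference may alternatively be invoked in place of a fully self-contained verification.
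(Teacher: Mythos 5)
Measured against the paper, your fallback \emph{is} the paper's proof: the paper never proves Lemma \ref{OST} at all, but states it verbatim as a result of Ozsv\'ath, Szab\'o and Thurston and simply cites \cite{OST} (``so it is sufficiently to check Commutation and Stabilization/Destabilization''). Your closing sentence, invoking \cite{OST}, therefore coincides exactly with the paper's treatment, and to that extent the proposal is correct. What you add beyond the paper is a sketch of the argument that lives inside the cited reference: reduce to a one-step shift (rows by the transposition symmetry of the move set, which is a legitimate reduction since transposition exchanges row moves with column moves), interpret the shift as pushing a column across the seam of the torus, and realize it as a stabilization creating a height-one ``finger,'' a sweep of commutations, and a destabilization. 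That is indeed the standard strategy, so your outline points at the right mechanism rather than a wrong one.

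Be aware, however, that if this were offered as a self-contained proof, the step you defer is not a routine verification but the entire content of the lemma. A height-one column can never interleave with a neighbor, but it \emph{must} share a row with two other columns: every row carries exactly one $X$ and one $O$, so the horizontal partners of the finger's two markings sit in specific columns, and when the finger becomes adjacent to such a column the ``strictly above'' hypothesis of the commutation move as stated in Theorem \ref{Dy} fails (the two column-intervals share an endpoint). The sweep therefore cannot consist of plain commutations alone; at those positions one needs either an auxiliary (de)stabilization pair or a convention for commutation that tolerates shared endpoints, and arranging this consistently is precisely the delicate bookkeeping you set aside. So: as a proof by citation your proposal matches the paper; as a direct proof it has a genuine, identified gap at the one place where something could go wrong.
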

\ \\

\item {\bf Commutation}

\begin{figure}[ht]
\begin{minipage}{3cm}
\includegraphics[width=\hsize]{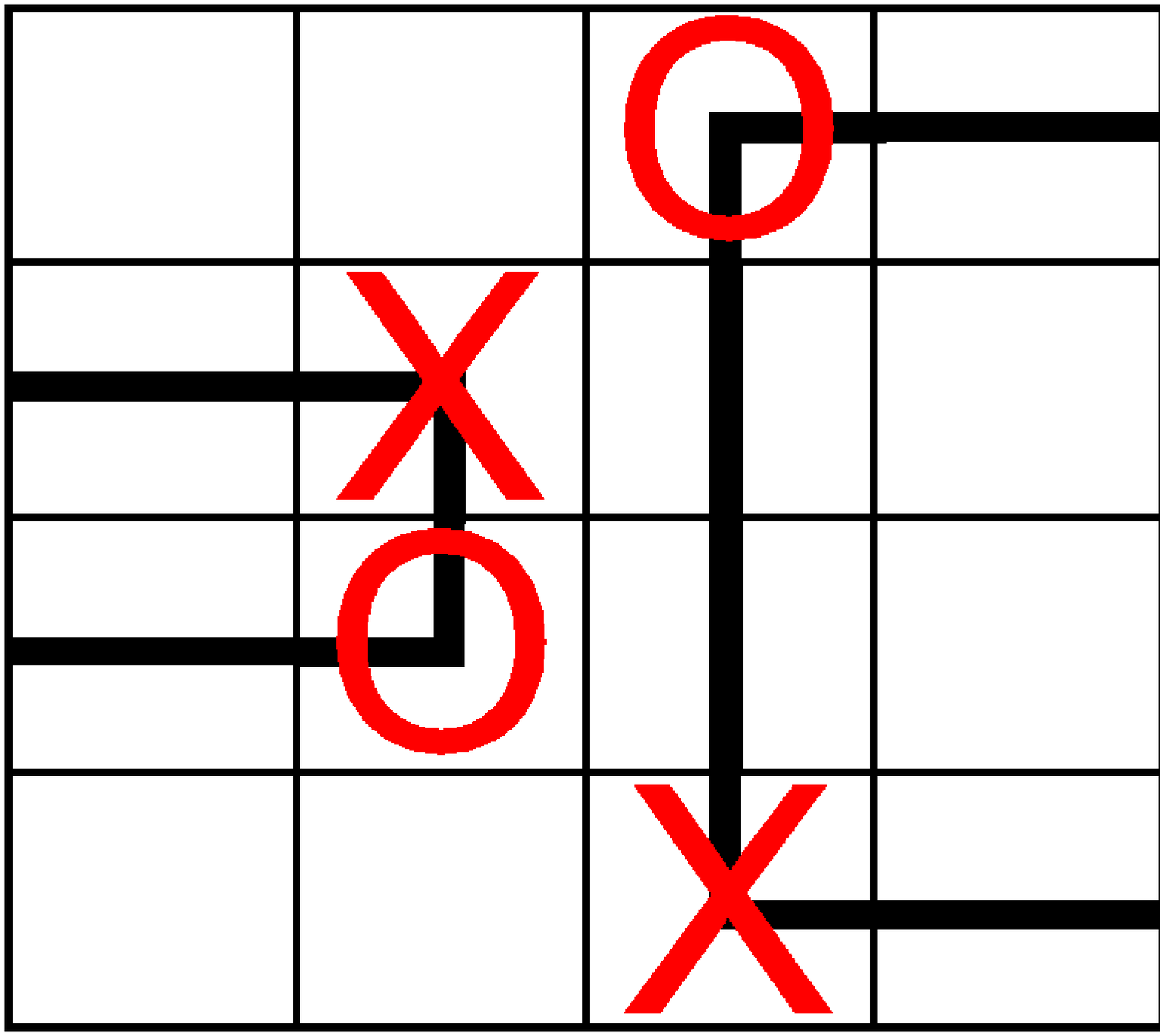}
\end{minipage}
\ $\Longleftrightarrow  $ \ 
\begin{minipage}{3cm}
\includegraphics[width=\hsize]{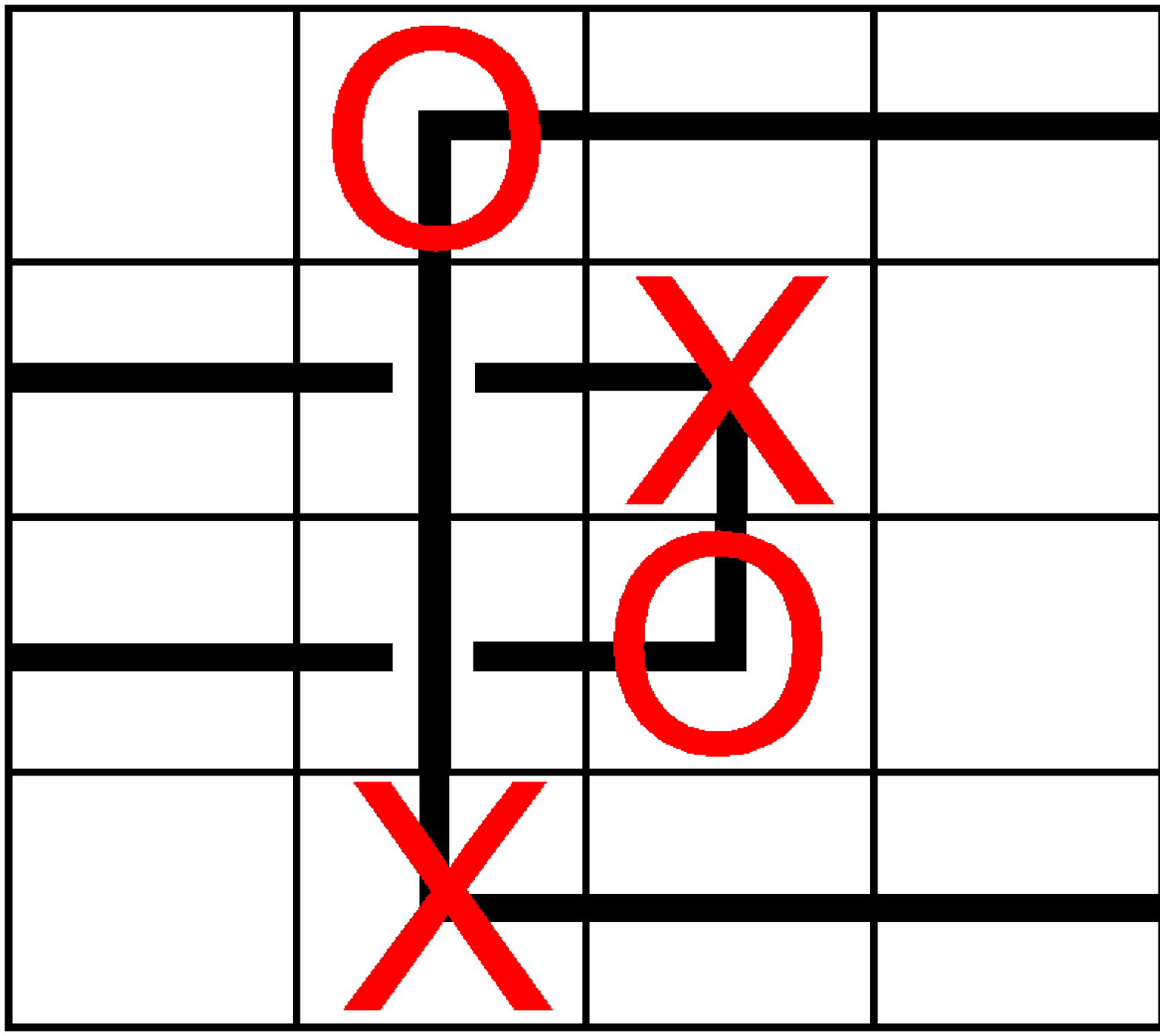}
\end{minipage}
\end{figure}

\[

.
\]
After this arrangement, we can contract segments. As parallel translation can be realized by mosaic moves, we have required result.

\end{proof}

Using this Lemma, we have the following:
\begin{pro}{\label{Ku2}}
Let $k$ be tame knot (or link) and let $K$ be arbitrary chosen mosaic representative of $k$, then there is a grid diagram $G$ of $k$ such that $K \sim G$.
\end{pro}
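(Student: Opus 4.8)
The plan is to take the desired grid diagram to be the $5\times$ zoom of $K$ itself: I would set $G:=z_{5\times}(K)$ and show that $G$ is already a grid diagram of $k$ with $K\sim G$, so that no separate rectification of $K$ is required. The point is that the zoom map has been rigged so that its output always lands in $\mathbb{G}$, while the preceding Lemma supplies the equivalence with $K$ at no cost.

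First I would invoke the preceding Lemma, which gives $z_{5\times}(K)\sim_{5n}\iota^{4n}K$ and hence $K\sim z_{5\times}(K)=G$; this disposes of the equivalence immediately, so the entire content is to verify that $G\in\mathbb{G}^{(5n)}$. For this I would inspect the definition of $z_{5\times}$ case by case and observe that each of the eleven $5\times5$ images is built from the eight grid tiles $T_0,\dots,T_6,T_{10}$ alone, with $T_7,T_8,T_9$ never appearing. The images of $T_0,\dots,T_6$ and $T_{10}$ are transparent. The images of $T_7$ and $T_8$ are the two non-crossing double arcs: each has an empty ($T_0$) centre and simply routes the two strands past one another with straight and corner tiles. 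The decisive case is $z_{5\times}(T_9)$: although $T_9$ is the horizontal-over-vertical crossing, its zoomed block reproduces that over/under pattern while physically using a single $T_{10}$ crossing tile in the centre, the mismatch being absorbed by the surrounding arcs. Hence every block, and therefore $z_{5\times}(K)$ globally, is assembled from grid tiles. Because the boundary connection points of each block are placed to mirror those of the original tile, adjacent blocks meet compatibly whenever the corresponding tiles of $K$ were suitably connected, so $z_{5\times}(K)$ is again a knot mosaic and thus lies in $\mathbb{G}^{(5n)}$. Finally, since every generator of $\mathbb{A}$ and the stabilization $\iota$ preserve the underlying knot, $K\sim G$ forces $G$ to be of the same type $k$.

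The step I expect to be the main obstacle is exactly this verification that $G$ belongs to $\mathbb{G}$ in the sense the rest of the paper needs, as opposed to merely being some mosaic over the grid alphabet. Two issues deserve care. The global connectivity claim must be checked uniformly: the finite boundary-matching data for the eleven images has to be consistent across every adjacency that can occur in an arbitrary $K$, guaranteeing that $z_{5\times}(K)$ is a genuine knot $5n$-mosaic and not merely a formal array. More delicately, one must ensure that an element of $\mathbb{G}^{(5n)}$ obtained this way is a bona fide grid diagram of $k$, so that Theorem~\ref{Dy} is applicable to it downstream; if the identification of $\mathbb{G}$ with grid diagrams is read strictly, I would append a brief normalization that uses elementary moves---which lie in $\mathbb{A}$ by Proposition~\ref{Ku1}---to bring the rectilinear diagram underlying $z_{5\times}(K)$ into standard grid position without leaving the mosaic class of $K$.
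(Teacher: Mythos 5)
Your overall skeleton (zoom $K$, invoke the preceding Lemma to get $K\sim z_{5\times}(K)$, observe that the zoomed blocks use only the tiles $T_0,\dots,T_6,T_{10}$) is the same as the paper's, but your central claim --- that $G:=z_{5\times}(K)$ is \emph{already} a grid diagram, so that ``no separate rectification is required'' --- is false, and this is precisely where the real content of the proposition lies. Membership in $\mathbb{G}^{(5n)}$ only means the mosaic is assembled from grid tiles; to be a grid diagram the underlying rectilinear diagram must in addition have no two horizontal segments with the same $y$-coordinate and no two vertical segments with the same $x$-coordinate (this is exactly the condition the paper imposes when constructing grid diagrams in Section \ref{grid_diagram}). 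The zoom violates this massively. A concrete, checkable failure: the middle row of the block $z_{5\times}(T_9)$ reads $(T_4,T_2,T_{10},T_4,T_2)$, which by itself contains three distinct maximal horizontal segments all at the same height; more generally, any two horizontal arcs of $K$ lying in the same row of $K$ stay at the same height after zooming. So for essentially every $K$ (certainly every $K$ containing a $T_9$), $z_{5\times}(K)$ lies in $\mathbb{G}^{(5n)}$ but is not a grid diagram.

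Your fallback repair does not close this gap, because it is circular: the elementary moves of Theorem \ref{Dy} (cyclic permutation, commutation, (de)stabilization) are operations defined \emph{on grid diagrams} --- they need the $\mathbb{X}$/$\mathbb{O}$ grid structure and carry grid diagrams to grid diagrams --- so they cannot be applied to a mosaic that is not yet a grid diagram, and Proposition \ref{Ku1} gives you nothing until after the normalization you are trying to perform. What the paper actually does at this point is different in kind: it perturbs the zoomed mosaic using \emph{mosaic planar isotopy moves} (enlarging by the mosaic injection $\iota$ if necessary) until no two segments lie on the same horizontal or vertical line. These moves lie in the ambient group, so the mosaic type is preserved, and only after this perturbation is the result a genuine grid diagram $G$ of $k$ with $K\sim z_{5\times}(K)\sim G$. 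That perturbation step is the entire substance of the paper's proof, and it is exactly the step your proposal omits and, in its opening sentence, explicitly disclaims.
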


\begin{proof}
$z_{5 \times}(K) $ has no tiles like $T_7,T_8,T_9$. Using mosaic planar isotopy moves (and mosaic injection $\iota$, if necessary), we can perturb the result so that no segments lie on the same horizontal or vertical line. Thus, we get a grid diagram of $k$.
\end{proof}

\section{Proof of main theorem}{\label {proof}}

Let $k_{1}$ and $k_{2}$ be two tame knots (or links), let $K_{1}$ and
$K_{2}$ be two arbitrary chosen mosaic representatives of $k_{1}$ and $k_{2}$,
respectively, and let $G_1$ and $G_2$ be two arbitrary chosen grid diagrams of $k_{1}$ and $k_{2}$, respectively, then the following conditions are equivalent.
\begin{description}
\item[(1)] $k_{1}$ and $k_{2}$ are of the same knot type.
\item[(2)] $K_{1}$ and $K_{2}$ are of the same knot mosaic
type.
\item[(3)] $G_{1}$ and $G_{2}$ can be connected by a finite sequence of the elementary moves.
\end{description}

\begin{proof}[Proof of main theorem]
\ 
\begin{description}
\item[(1)$\Rightarrow $ (3)] Theorem {\ref{Dy}}.
\item[(3)$\Rightarrow $ (2)] By Proposition {\ref{Ku1}}, assumption means $G_1 \sim G_2$. By Proposition {\ref{Ku2}}, $\exists G(K_i) \in \mathbb{G}$ s.t. $K_i \sim G(K_i) \ (i=1,2)$. Here, $G(K_i)$ and $G_i$ are grid diagrams of the same knot $k_{i}$, so, by using Theorem {\ref{Dy}} and Proposition {\ref{Ku1}}, $G(K_i) \sim G_i$. From these facts, result follows.

\item[(2)$\Rightarrow $ (1)] By the forgetting map of the square tiling of knot mosaic, $K_i$ can naturally be identified with a diagram of the
knot $k_i$ , and mosaic moves can be regarded as ambient istopies. 
\end{description}
\end{proof}

\section{ $\clubsuit$ appendix $\clubsuit$ \ \  A list of $\mathbb{K}^{(n)}/\mathbb{A}(n) \ (n \leq 4)$ and mosaic number of some knots }{\label {appendix}}

\begin{thm}{\label{list}}
\ 
\begin{enumerate}
\item
\[ 
\mathbb{K}^{(1)}/\mathbb{A}(1) = \{ 

\]

\end{document}